\documentclass[11pt, leqno]{amsart}
\usepackage{amsmath,amssymb,txfonts}
\usepackage{amssymb}
\usepackage{amsxtra}
\usepackage{amsthm, color}
\usepackage{txfonts}
\usepackage{graphicx}
\usepackage{times}
\usepackage{citeref}
\usepackage{tikz}
\usepackage{pgfplots}
\usepackage{tikz-3dplot}
\numberwithin{equation}{section}
\usepackage{color}

\usepackage[cp1252]{inputenc}
\usepackage{bbm}
\usepackage{mathrsfs}
\usepackage{graphicx}

\usepackage[active]{srcltx}

\usepackage{graphicx}

\newtheorem{prop}{Proposition}[section]
\newtheorem{theorem}[prop]{Theorem}
\newtheorem{lemma}[prop]{Lemma}

\usepackage{tikz}
\usetikzlibrary{calc}



\begin{document}

	\title[\tiny
	A Unified Quermassintegral Approach to Quasilinear Heat Dispersion and Loss]{A Unified Quermassintegral Approach to \\ Quasilinear Heat Dispersion and Loss}
	
	\author{\tiny Xiaoshang Jin}
	\address{School of Mathematics and Statistics, Huazhong University of Science and Technology, Wuhan, Hubei 430074, China}
	\email{jinxs@hust.edu.cn}
	\author{\tiny Jie Xiao}
	\address{Department of Mathematics \& Statistics,
		Memorial University, St. John's, NL A1C 5S7, Canada}
	\email{jxiao@math.mun.ca}

	\keywords{}

	\begin{abstract}
		This paper establishes a fundamental connection between quasilinear potential theory and convex geometric analysis by investigating the interplay between the
		quasilinear Laplace operator and quermassintegrals. We introduce a quasilinear heat dispersion law for convex conductors and prove that, among all convex conductors of a fixed mean width, the closed ball is a unique maximizer of this dispersion. By characterizing the
		quasilinear heat loss of a convex conductor explicitly in terms of its quermassintegrals, we demonstrate not only a formal equivalence between the isocapacitary and isoperimetric inequalities in the setting of mathematical physics but also that, among all convex conductors of a fixed mean width, the closed ball is a unique maximizer of this loss. These results provide a novel bridge between the metric properties of convex conductors and the variational analysis of quasilinear elliptic operators, offering a unified perspective on sharp geometric inequalities and their extremal cases.
	\end{abstract}

	\thanks{The first-named \& second-named authors were supported by {the Fundamental Research Funds for the Central Universities HUST: \#2025BRSXB002} \& NSERC of Canada \# 202979 respectively.}
	
	\subjclass[2010]{31B15, 49Q10, 53C15, 74G65}
	\date{}

%    General info

%\date{}

%\dedicatory{In memory of Adriano M. Garsia 1928-2010}

%\dedicatory{Dedicated to Adriano M. Garsia who surely appreciated a simplified approach}

\keywords{Quasilinear heat dispersions/losses, quermassintegrals, sharp geometric physical inequalities}

\maketitle

\tableofcontents

\section{Introduction}\label{s1}
\setcounter{equation}{0}

\subsection{Brief of quasilinear heat dispersion/loss}\label{s11}

Given not only a real pair $(p,{\lambda})\in [1,\infty)\times(0,\infty)$ but also a compact connected subset $K$ of the closure $\Omega$ of an open set $\Omega$ in the Euclidean space $\mathbb R^{n\ge 2}$ which not only exists as a conductor of the unit temperature but also is thermally insulated by surrounding it with the layer of thermal insulator $\Omega\setminus K$ with Lipschitz bounary pair $\{\partial K,\partial\Omega\}$, the quasilinear heat dispersion of $(K,\Omega)$ is determined by such a mathematical physic quantity (cf. \cite{AC, DNT})
\begin{equation}
\label{e11}
{{{\mathsf{H^d}}}}_{p,{\lambda}}(K,\Omega)=\underset{f\in W^{1,p}(\Omega)\ \&\ f\big|_K=1}{\inf}\left(\int_{\Omega}|\nabla f|^p\,d\mathcal{L}^n+{\lambda}\int_{\partial\Omega}|f|^p\,d\mathcal{H}^{n-1}\right),
\end{equation}
that refers to the quasilinear spreading of heat from a concentrated source over a larger area or volume, where
$$
\begin{cases}
W^{1,p}(\Omega)=\text{the first-order Sobolev $p$-space on $\Omega$};\\
d\mathcal{L}^n=\text{the $n$-dimensional Lebesgue measure element};\\
d\mathcal{H}^{n-1}=\text{the $(n-1)$-dimensional Hausdorff measure element}.
\end{cases}
$$

Below are two special circumstances.
\begin{itemize}
	\item Since
	$$
	f\in W^{1,p}_0(\Omega)=\overline{C_0^\infty(\Omega)}^{W^{1,p}(\Omega)} \Longrightarrow f\big|_{\partial\Omega}=0\ \ \text{by}\ \ C_0^\infty(\Omega)=\big\{\text{smooth functions compactly supported in $\Omega$}\big\},
	$$
	there holds
\begin{equation}\label{e12}
 {{{\mathsf{H^d}}}}_{p,{\lambda}}(K,\Omega)\le {{\mathsf{H^l}}}_p(K,\Omega)=\underset{f\in W^{1,p}_0(\Omega)\ \&\ f\big|_K=1}{\inf}\int_{\Omega}|\nabla f|^p\,d\mathcal{L}^n
\end{equation} where ${{\mathsf{H^l}}}_p(K,\Omega)$ is not only $(K,\Omega)$'s quasilinear heat loss (or $p$-capacitance) -i.e.- the quasilinear transfer of thermal energy away from a system (e.g., a building, hot object) to its cooler surroundings but also has such a geometric endpoint (cf. \cite[Lemma 2.2.5]{MazBook}{
	$$
	{{\mathsf{H^l}}}_1(K,\Omega)=\inf\Bigg\{\mathcal{H}^{n-1}(\partial O):\ K\subset O\subset\overline{O}\subset \Omega\ \forall\ \hbox{open}\ O\ \hbox{with smooth}\ \partial O\Bigg\}.
	$$
}
	Interestingly, \cite[Theorem 1.1]{LiHo} (cf. \cite{BMP} for $p=2$) reveals
	\begin{equation}
	\label{e13}
	{{\mathsf{H^l}}}_p(K,\Omega)= {{\mathsf{H^l}}}_{\Delta_p}(K,\Omega)=\underset{f\in C^\infty_0(\Omega)\ \&\ f\big|_K=1}{\inf}2^{-1}\int_{\Omega}|\Delta_p f|\,d\mathcal{L}^n\ \ \text{under}\ \ p\in (1,\infty),
	\end{equation}
     where
     $$
     \begin{cases} \Delta_p f=\nabla\cdot(|\nabla f|^{p-2}\nabla f)=\text{the quasilinear Laplace operator}\\
     \big(\text{whose {\color{red}{red arrow lengths}} induce quasilinear diffusions}\big):\\
     \begin{tikzpicture}[scale=2]

     % Surface grid
     \draw[step=0.5,gray!30,thin] (-2,-2) grid (2,2);

     % Axes
     \draw[->] (-2.2,0) -- (2.2,0) node[right] {${}$};
     \draw[->] (0,-2.2) -- (0,2.2) node[above] {${}$};

     % Level sets (solution f)
     \draw[thick,blue] (0,0) circle (0.5);
     \draw[thick,blue] (0,0) circle (1);
     \draw[thick,blue] (0,0) circle (1.5);

     % Gradient vectors
     \foreach \r in {0.5,1,1.5}{
     	\draw[->,red,thick] (\r,0) -- ({\r+0.3*\r},0);
     	\draw[->,red,thick] (-\r,0) -- ({-\r-0.3*\r},0);
     	\draw[->,red,thick] (0,\r) -- (0,{\r+0.3*\r});
     	\draw[->,red,thick] (0,-\r) -- (0,{-\r-0.3*\r});
     }

     % Labels
     \node at (1.6,1.6) {$f$};
     \node[red] at (1.2,0.4) {quasilinear flux $|\nabla f|^{p-2}\nabla f$};
     \node[blue] at (1.5,-1.7) {level sets of $f$};

     \end{tikzpicture}
	\end{cases}
	$$
	\item If $f=1$ in $\Omega,$ then
	one has
	\begin{equation}
	\label{e14}
	{{{\mathsf{H^d}}}}_{p,{\lambda}}(K,\Omega)\le {\lambda}\mathcal{H}^{n-1}(\partial\Omega)\ \ \forall\ \ p\in [1,\infty).
	\end{equation}
	Moreover, if the interior $K^\circ$ of $K$ is equal to $\Omega$, then
	$$K=\overline{\Omega}\ \ \&\ \ |\nabla f|\big|_\Omega=0=(f-1)\big|_{\partial\Omega},
	$$
	whence \eqref{e14} reaches its equality
	$$
	{{{\mathsf{H^d}}}}_{p,{\lambda}}(K,\Omega)={\lambda}\mathcal{H}^{n-1}(\partial\Omega)\ \ \forall\ \ p\in [1,\infty).
	$$
	Even more interesting, \cite[Theorem 1.2]{LiHo} (cf. \cite{BP} for $p=2$) indicates
	\begin{equation}
	\label{e15}
	\mathcal{H}^{n-1}(\partial K)
	={{\mathsf{H^l}}}_{\Delta_p}(\partial K,\partial\Omega)=
	\underset{f\in C_0^1(\overline{\Omega})\, \&\, \Delta_p f\in C(\overline{\Omega})\, \&\, -|\nabla f|^{p-2}\frac{\partial f}{\partial\nu}\big|_{\partial K}\ge 1}{\inf}\int_{\Omega}|\Delta_p f|\,d\mathcal{L}^n.
	\end{equation}
	
\end{itemize}

\subsection{Statement of Theorem \ref{t11}}\label{s12}

 Needless to say, a careful look at \eqref{e12}-\eqref{e13}-\eqref{e14}-\eqref{e15} tells us that an intrinsic study relies truly on the geometric structure of the conductor $(K,\Omega)$. If $K$ is convex compact subset of $\mathbb R^n$ with its interior $K^\circ\not=\emptyset$ (denoted simply by $K\in\mathscr{K}^{n}$), $\mathbb B^n$ is the unit open ball of $\mathbb R^n$ with compact boundary $\mathbb S^{n-1}$ \& closure $\overline{\mathbb B^n}$, and
$$
K+t{\mathbb B}^n=\big\{x+t y: (x,y)\in K\times{\mathbb B}^n\big\}\ \ \forall\ \  t\in (0,\infty),
$$
then (cf. \cite[p.213, (4.8)]{Sch}))
\begin{equation}
\label{e17}
\begin{cases}
\mathcal{H}^{n-1}\big(\partial(K+t{\mathbb B}^n)\big)=n\sum\limits_{j=0}^{n-1} \binom{n-1}{j}{\mathcal W}_{j+1}(K)t^j=\mathcal{H}^{n-1}(\partial K)+\cdots+\sigma_{n-1} t^{n-1};\\
\mathcal{L}^n(K+t{\mathbb B}^n)=\sum\limits_{j=0}^n\binom{n}{j}{\mathcal W}_j(K)t^j=\mathcal{L}^n(K)+n\mathcal{W}_1(K)t+\cdots+\upsilon_n t^n;\\
\mathcal{W}_j(K)=\text{the so-called $\{0,1,...,n\}\ni j$-quermassintegral of $K$};\\
\mathcal{W}_0(K)=\text{the Lebesgue volume of $K$};\\
n\mathcal{W}_1(K)=\mathcal{H}^{n-1}(\partial K)=\text{the surface area of $K$};\\
\mathcal{W}_{n-1}(K)=\text{the mean width of $K$};\\
\mathcal{L}^n\big(\overline{\mathbb B^n}\big)=\upsilon_n=n^{-1}\sigma_{n-1}=n^{-1}\mathcal{H}^{n-1}(\mathbb S^{n-1}).
\end{cases}
\end{equation}
Especially, if $\partial K$ is of class $C^2$ with its Gaussian map being a diffeomorphism (cf. \cite[p.296, (5.55)]{Sch}) then for $j\in\{1,...,n\}$ there is
$$
\begin{cases}
\mathcal{W}_j(K)=n^{-1}\int_{\partial K} \bigg(\frac{\varsigma_{j-1}\big(\kappa_1,...,\kappa_{n-1};\partial K\big)}{\binom{n-1}{j-1}}\bigg)\,d\mathcal{H}^{n-1};\\
\varsigma_{j-1}\big(\kappa_1,...,\kappa_{n-1};\partial K\big)=\sum\limits_{i_1<i_2<\cdots<i_{j-1}}\kappa_{i_1}\kappa_{i_2}\cdots\kappa_{i_{j-1}};\\
\kappa_j=\text{the $j$-th principal curvature on $\partial K$}.
\end{cases}
$$
Thus, the geometric quantity pair
$$
\Big\{{{\mathsf{H^l}}}_j(\cdot,\cdot), \mathcal{W}_{j}(\cdot)\Big\}
$$
has the same homogeneous degree
$$n-j\ \ \text{as}\ \  j\in\{1,...,n-1\}.
$$
This, along with not only \cite{DNT, Ba, AC, ChW} but also \eqref{e11} \& \eqref{e13}, intrinsically motivates us to achieve the following new quasilinear heat dispersion law through quermassintegrals.

\begin{theorem}
	\label{t11} Let
	$$\begin{cases}
	(K,\Omega)=\text {a conductor in $\mathbb R^n$ {with Lipschitz boundary pair $\{\partial K,\partial\Omega\}$}};\\
	(p-1,{\lambda})\in (1,\infty)^2;\\
	{{{\mathsf{H^d}}}}_{\Delta_p,{\lambda}}(K,\Omega)= \underset{\tiny
		f\in C^2(\overline{\Omega})\, \&\, f\big|_K=1\, \&\,
		({\frac{\partial f}{\partial\nu}}){|\nabla f|^{p-2}}+{{\lambda} f}{|f|^{p-2}}=0\ \text{on}\, \partial\Omega}{\inf} 2^{-1}\left(\int_{\Omega}|\Delta_p f|\,d\mathcal{L}^{n}+{\lambda}\int_{\partial\Omega}|f|^{p-1}{d\mathcal{H}^{n-1}}\right).
		\end{cases}
		$$
		\begin{itemize}
			\item[\rm(i)] There is the identification
		\begin{equation}{\color{red}
		\label{e18}
		{{{\mathsf{H^d}}}}_{\Delta_p,{\lambda}}(K,\Omega)={{{\mathsf{H^d}}}}_{p,{\lambda}}(K,\Omega).}
		\end{equation}
			
		\item [\rm (ii)]	
		If $K^\ast$ is a closed ball of the same mean width as $\mathscr{K}^n\ni K$'s one - i.e. -
\begin{equation}
\label{eMW}
\mathcal{W}_{n-1}(K)=\mathcal{W}_{n-1}\big(K^\ast\big),
\end{equation}
then there is
		\begin{equation}{\color{red}
		\label{e19}
		{{{\mathsf{H^d}}}}_{\Delta_p,{\lambda}}\big(K,K+t{\mathbb B}^n\big)\le {{{\mathsf{H^d}}}}_{\Delta_p,{\lambda}}\big(K^\ast,K^\ast+t{\mathbb B}^n\big) \ \ \forall\ \  t\in (0,\infty),}
		\end{equation}
		with equality iff $K$ is a closed ball.
		
		\end{itemize}
\end{theorem}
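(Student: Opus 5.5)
For (i) I would prove two inequalities and use the minimizer of \eqref{e11} as the bridge. The key identity comes from the divergence theorem. For $f$ admissible in ${\mathsf{H^d}}_{\Delta_p,\lambda}(K,\Omega)$ and a cutoff $\phi=\min\{\max\{f,0\},1\}$, or more generally a function of $f$, one has
\[
\int_\Omega \phi\,\Delta_p f\,d\mathcal L^n=\int_{\partial\Omega}\phi|\nabla f|^{p-2}\tfrac{\partial f}{\partial\nu}\,d\mathcal H^{n-1}-\int_{\partial K}\cdots-\int_\Omega |\nabla f|^{p-2}\nabla f\cdot\nabla\phi\,d\mathcal L^n .
\]
The Robin condition turns the $\partial\Omega$ term into $-\lambda\int_{\partial\Omega}\phi f|f|^{p-2}$.

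\emph{Lower bound} ${\mathsf{H^d}}_{\Delta_p,\lambda}\ge{\mathsf{H^d}}_{p,\lambda}$. Following the coarea strategy of \cite{LiHo}, set $\Omega_s=\{f>s\}$ for $s\in(0,1)$. Integrating $\Delta_p f$ over $\Omega_s$ gives
\[
\int_{\Omega}|\Delta_p f|\ge \int_{\{f=s\}\cap\Omega}|\nabla f|^{p-1}d\mathcal H^{n-1}-\lambda\int_{\partial\Omega\cap\{f>s\}}|f|^{p-1}\ldots .
\]
Integrating in $s$ over $(0,1)$ and using the coarea formula produces $\int_{\{0<f<1\}}|\nabla f|^p$, which is $\ge$ the energy of the truncation $\min\{f_+,1\}$. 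The boundary term is handled analogously. The factor $2^{-1}$ must be absorbed by the symmetric contributions (positive and negative parts of $\Delta_p f$ having equal total mass up to boundary flux). I expect this bookkeeping to be the main obstacle in (i): the boundary integral carries $|f|^{p-1}$ while \eqref{e11} has $|f|^p$. So I would first attempt to show that the infimum is attained along the $p$-harmonic minimizer of \eqref{e11}, for which $0\le f\le1$ and $|f|^{p-1}\ge|f|^p$.

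\emph{Upper bound.} I would take the minimizer $u$ of \eqref{e11}. It is $p$-harmonic in $\Omega\setminus K$, satisfies exactly the Robin condition on $\partial\Omega$, and has $0\le u\le 1$. By the identity above,
\[
{\mathsf{H^d}}_{p,\lambda}=\int_{\partial K}|\nabla u|^{p-2}\partial_\nu u=\lambda\int_{\partial\Omega}u^{p-1}.
\]
I would then mollify $u$ near $\partial K$ to get $C^2$ functions $f_\epsilon$, with $\Delta_p f_\epsilon$ a measure concentrating on $\partial K$ of mass tending to that flux. This yields $2^{-1}(\text{flux}+\lambda\int u^{p-1})$, which equals ${\mathsf{H^d}}_{p,\lambda}$ by the equality of the two expressions above. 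The regularity needed (boundary $C^{1,\alpha}$ for Lipschitz data) is where approximation arguments are required.

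For (ii), by (i) it suffices to compare ${\mathsf{H^d}}_{p,\lambda}$. For $\Omega=K+t\mathbb B^n$ and $K$ convex, I would test \eqref{e11} with a radial-type function of the distance, $f=g(\mathrm{dist}(x,K))$, with $g$ the one-dimensional optimal profile of the ball problem. The level sets are $\partial(K+s\mathbb B^n)$, whose areas are given by \eqref{e17}. This bounds ${\mathsf{H^d}}_{p,\lambda}(K,K+t\mathbb B^n)$ by a functional of the polynomial $s\mapsto\mathcal H^{n-1}(\partial(K+s\mathbb B^n))$, which is in turn controlled via the Aleksandrov–Fenchel inequalities $\mathcal W_{j}(K)\le \mathcal W_{j}(K^\ast)$, $j\le n-1$, under \eqref{eMW}. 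For the ball this test function is exactly optimal, giving \eqref{e19}. Equality forces equality in the Aleksandrov–Fenchel step, hence $K$ is a ball.
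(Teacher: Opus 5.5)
\textbf{The lower bound in (i) has a genuine gap, at exactly the step you flagged.} Your treatment of (ii), and your plan for the upper bound in (i), follow the paper's route.

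Write $E(v)=\int_\Omega|\nabla v|^p\,d\mathcal L^n+\lambda\int_{\partial\Omega}|v|^p\,d\mathcal H^{n-1}$ and $Tf=\min\{\max\{f,0\},1\}$.

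First, a sign. On $\partial\Omega$ you have $|\nabla f|^{p-2}\partial_\nu f=-\lambda|f|^{p-2}f$, so the divergence theorem on $\{f>s\}\setminus K$ gives $-\int_{\{f>s\}}\Delta_p f=\int_{\{f=s\}}|\nabla f|^{p-1}\,d\mathcal H^{n-1}+\lambda\int_{\partial\Omega\cap\{f>s\}}|f|^{p-2}f\,d\mathcal H^{n-1}$. The boundary term carries a plus sign; with your minus sign the boundary part of $E$ is never recovered.

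More seriously, you bound $\int_\Omega|\Delta_p f|$ by the level-set flux. Integrating over $s\in(0,1)$ then gives only $\int_\Omega|\Delta_p f|\ge E(Tf)$. To conclude from there you would need $\lambda\int_{\partial\Omega}|f|^{p-1}\ge E(Tf)$. That holds (with equality) for the $p$-harmonic minimizer, but it fails for a general admissible $f$: take $f$ dropping steeply from $1$ to values of size $\varepsilon$ in a thin layer around $\partial K$.

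Your fallback, showing that the infimum is attained along the minimizer, cannot prove a lower bound, which must hold for every admissible $f$. Moreover the minimizer is not admissible: its flux through $\partial K$ is $\lambda\int_{\partial\Omega}u_\dagger^{p-1}>0$, whereas every admissible $f$ has $\nabla f=0$ on $\partial K$. The infimum is only approached.

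\textbf{The missing idea is the global flux identity.} Since $f\equiv1$ on $K$, $\nabla f=0$ on $\partial K$. The divergence theorem on $\Omega\setminus K$ plus the Robin condition then give $\int_\Omega\Delta_p f\,d\mathcal L^n=-\lambda\int_{\partial\Omega}|f|^{p-2}f\,d\mathcal H^{n-1}$, hence
\[
\int_\Omega|\Delta_p f|\,d\mathcal L^n+\lambda\int_{\partial\Omega}|f|^{p-1}\,d\mathcal H^{n-1}\ge\int_\Omega|\Delta_p f|\,d\mathcal L^n+\lambda\int_{\partial\Omega}|f|^{p-2}f\,d\mathcal H^{n-1}=2\int_\Omega(\Delta_p f)^-\,d\mathcal L^n .
\]
Now apply your level-set bound to $(\Delta_p f)^-$ instead of $|\Delta_p f|$. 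For each $s$ you have $\int_\Omega(\Delta_p f)^-\ge-\int_{\{f>s\}}\Delta_p f$. Integrating over $s\in(0,1)$ yields
\[
\int_\Omega(\Delta_p f)^-\ge\int_{\{0<f<1\}}|\nabla f|^p+\lambda\int_{\partial\Omega}\min\{f_+,1\}|f|^{p-2}f\ge E(Tf)\ge{\mathsf{H^d}}_{p,\lambda}(K,\Omega).
\]
So the $|f|^{p-1}$ term is not a mismatch to be repaired. It is precisely what this identity produces, and it supplies the factor $2$.

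The paper compresses this into one line. After reducing to $0\le f\le1$, it pairs $\Delta_p f$ with $1-2f$: $\int|\Delta_p f|\ge\int(1-2f)\Delta_p f=2E(f)-\lambda\int_{\partial\Omega}f^{p-1}$. The truncation version above even avoids that reduction.

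\textbf{Upper bound in (i).} ``Mollifying near $\partial K$'' needs to be made concrete, because a generic smoothing gives no control of $\int|\Delta_p f_\epsilon|$. The paper takes $w=h(u_\dagger)$ with $h(t)=t$ on $[0,1-\delta]$, $h(1)=1$ and $h'(1)=h''(1)=0$. Then:
\begin{itemize}
\item[(a)] $\Delta_p w=((h')^{p-1})'(u_\dagger)|\nabla u_\dagger|^p$;
\item[(b)] $w=u_\dagger$ near $\partial\Omega$, so the Robin condition is untouched;
\item[(c)] by the coarea formula and the constancy of $\int_{\{u_\dagger=t\}}|\nabla u_\dagger|^{p-1}=\lambda\int_{\partial\Omega}u_\dagger^{p-1}$, $\int|\Delta_p w|$ equals the total variation of $(h')^{p-1}$ on $[1-\delta,1]$ times that flux.
\end{itemize}
This variation must exceed $1$, since $h'$ has to exceed $1$ somewhere for $h(1)=1$ to hold, but it can be driven to $1$. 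That yields your $2^{-1}\big(\text{flux}+\lambda\int_{\partial\Omega}u_\dagger^{p-1}\big)$.

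\textbf{Part (ii).} Your argument is the paper's: test with the ball's radial profile composed with ${\rm dist}(\cdot,K)$, use Steiner's formula, and use $\mathcal W_j(K)\le\mathcal W_j(K^\ast)$ from Aleksandrov--Fenchel under \eqref{eMW}.
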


\subsection{Statement of Theorem \ref{t12}}\label{s13}
For any compact subset $K$ of $\mathbb{R}^n$, let
\begin{equation}\label{e111}
{{\mathsf{H^l}}}_p(K)=\lim\limits_{\Omega\rightarrow\mathbb{R}^n}{{\mathsf{H^l}}}_p(K,\Omega)=\text{the $p$-capacitance of $K$}.
\end{equation}
The limit in \eqref{e111} not only exists but also does not depend on the choice of $\Omega.$ By the P\'olya description within \cite{Po}, up to a constant factor, ${{\mathsf{H^l}}}_2(K)$ = the electric capacity of $K$ can be interpreted as the (linear) heat lost by $K$ in unit time within steady state while this heat propagates through $\mathbb R^n$. Thus, ${{\mathsf{H^l}}}_p(K)$ is naturally regarded as a total heat loss at the quasilinear equilibrium -i.e.- the quasilinear heat lost by $K$ through space $\mathbb R^n$ in unit time within steady state. Moreover, there is the following
variational form just like \eqref{e11}:
$$
{{\mathsf{H^l}}}_p(K)=\underset{f\in W^{1,p}_0(\mathbb{R}^n)\ \&\ f\big|_K=1}{\inf}\left(\int_{\mathbb{R}^n}|\nabla f|^p\,d\mathcal{L}^n\right).
$$

As an extremal circumstance of Theorem \ref{t11}, we surpringly discover the coming-up-next brand-new law for the quasilinear heat loss through quermassintegrals.

\begin{theorem}\label{t12}
 Let $1<p<n$.
 \begin{itemize}

 	\item[\rm (i)] There is the equivalence
 	\begin{equation}{\color{red}
 	\label{e417e}
 	{\rm (IC)}\ \frac{\left(\frac{{{\mathsf{H^l}}}_{p}(K)}{{{\mathsf{H^l}}}_{p}(\overline{\mathbb{B}^n})}\right)^\frac{1}{n-p}}{\left(\frac{\mathcal{W}_0(K)}{\upsilon_n}\right)^\frac{1}{n}}\ge 1
 	\ \ \forall\ \ K\in\mathscr{K}^n\Longleftrightarrow {\rm (IP)}\ \frac{\left(\frac{\mathcal{W}_1(K)}{\upsilon_{n}}\right)^\frac1{n-1}}{\left(\frac{\mathcal{W}_0(K)}{\upsilon_n}\right)^\frac{1}{n}}\ge 1\ \ \forall\ \  K\in
 	\mathscr{K}^n.}
 	\end{equation}
 	Naturally, {\rm(IC)} \& {\rm(IP)} are always true with their equalities holding iff $K$ is a closed ball.	
 	
		\item[\rm (ii)] For $K\in\mathscr{K}^{n\ge 3}$ with {Lipschitz} boundary $\partial K$ there is
	\begin{equation}
	\label{e113}
	{{\mathsf{H^l}}}_{p}(K)\le\begin{cases} n\left(\frac{n-p}{p-1}\right)^{p-1}\mathcal{W}_1\left(\Big(\frac{\mathcal{W}_2(K)}{\mathcal{W}_1(K)}\Big)^\frac{p-1}{n-1}K\right)=n\left(\frac{n-p}{p-1}\right)^{p-1}\mathcal{W}_2\left(\Big(\frac{\mathcal{W}_2(K)}{\mathcal{W}_1(K)}\Big)^\frac{p-2}{n-2}K\right);\\
	\sigma_{n-1}\left(\frac{n-p}{p-1}\right)^{p-1}\Big(\upsilon_n^{-1}\mathcal{W}_{n-1}(K)\Big)^{n-p},
	\end{cases}
	\end{equation}
	with its inequality becoming an equality iff $K$ is a closed ball. Especially, either \eqref{e113}'s first inequality under $p=2=n-1$ or \eqref{e113}'s second inequality under $1<p<n\ge 3$ amounts to that if $K^\ast$ is a closed ball with the mean width condition \eqref{eMW} then
	\begin{equation}{\color{red}
	\label{eMC}
	{\mathsf{H^l}}_p(K)\le{\mathsf{H^l}}_p(K^\ast),}
	\end{equation}
	with equality iff $K$ is a closed ball.
	\end{itemize}
\end{theorem}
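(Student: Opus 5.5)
Part (i) will follow once we have a geometric upper bound for the capacity that is sharp on balls. The plan is to work with the $p$-capacitary potential. For $K\in\mathscr{K}^n$ and $1<p<n$, let $u$ be the $p$-equilibrium potential: $\Delta_p u=0$ in $\mathbb R^n\setminus K$, $u=1$ on $K$, $u\to0$ at infinity. The level sets $\{u\ge s\}$ are convex; this is Lewis' theorem, which we take as known. We also have ${\mathsf{H^l}}_p(K)=\int_{\{u=s\}}|\nabla u|^{p-1}\,d\mathcal{H}^{n-1}$ for every $s\in(0,1)$. Two inputs will be used:
\begin{itemize}
\item the Pólya--Szegő/Schwarz symmetrization inequality ${\mathsf{H^l}}_p(K)\ge {\mathsf{H^l}}_p(B_K)$, where $B_K$ is the ball with $\mathcal{W}_0(B_K)=\mathcal{W}_0(K)$;
\item the explicit formula ${\mathsf{H^l}}_p(r\overline{\mathbb B^n})=\sigma_{n-1}\big(\frac{n-p}{p-1}\big)^{p-1}r^{n-p}$.
\end{itemize}

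\textbf{Part (i).} The symmetrization inequality gives (IC) at once. The stated equivalence with (IP) is proved in two directions.
\begin{itemize}
\item (IP)$\Rightarrow$(IC): compare the capacity of $K$ with that of the ball of equal volume, using the coarea formula and Hölder's inequality on the level sets $\{u=s\}$. The perimeter of each superlevel set is bounded below via (IP), which yields a differential inequality for the volume $V(s)=\mathcal{L}^n(\{u\ge s\})$. Integrating it from $s=1$ to $s=0$ gives (IC).
\item (IC)$\Rightarrow$(IP): apply (IC) to the outer parallel bodies $K+t\mathbb B^n$, or use the limit $p\to1^+$. We have ${\mathsf{H^l}}_p(K)\to{\mathsf{H^l}}_1(K)=\mathcal{H}^{n-1}(\partial K)=n\mathcal{W}_1(K)$ for convex $K$, and the exponents $\frac1{n-p}\to\frac1{n-1}$, so (IC) passes to (IP) in the limit.
\end{itemize}
Equality cases come from the equality cases of the classical isoperimetric inequality and of Pólya--Szegő symmetrization (Brothers--Ziemer type rigidity).

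\textbf{Part (ii).} We need upper bounds on ${\mathsf{H^l}}_p(K)$, so we test the capacity with explicit competitors built from the parallel-body structure:
\[
f(x)=\phi\big(\operatorname{dist}(x,K)\big),
\]
with $\phi$ one-dimensional and decreasing, $\phi(0)=1$, $\phi(\infty)=0$. Since $|\nabla \operatorname{dist}|=1$ off $K$, the coarea formula and \eqref{e17} give
\[
\int|\nabla f|^p\,d\mathcal{L}^n=\int_0^\infty|\phi'(t)|^p\,A(t)\,dt,\qquad A(t)=n\sum_{j=0}^{n-1}\binom{n-1}{j}\mathcal{W}_{j+1}(K)t^j .
\]
Optimizing over $\phi$ by Hölder's inequality gives
\[
{\mathsf{H^l}}_p(K)\le\Big(\int_0^\infty A(t)^{-\frac1{p-1}}\,dt\Big)^{1-p}.
\]
The two lines of \eqref{e113} then come from lower-bounding $A(t)$.
\begin{itemize}
\item \textbf{Second line.} The Alexandrov--Fenchel inequalities $\mathcal{W}_{j}(K)\ge \upsilon_n^{1-\frac{n-j}{\,\cdots\,}}\cdots$ give $\mathcal{W}_{j+1}(K)\ge \upsilon_n\big(\mathcal{W}_{n-1}(K)/\upsilon_n\big)^{n-j-1}$. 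Hence $A(t)\ge\sigma_{n-1}(R+t)^{n-1}$ with $R=\mathcal{W}_{n-1}(K)/\upsilon_n$, the mean-width radius. The integral then evaluates exactly to $\sigma_{n-1}\big(\frac{n-p}{p-1}\big)^{p-1}R^{n-p}$, which is the second bound and gives \eqref{eMC} directly.
\item \textbf{First line.} Use instead the Alexandrov--Fenchel chain $\mathcal{W}_{j+1}\ge \mathcal{W}_1(\mathcal{W}_2/\mathcal{W}_1)^{j}$. This gives $A(t)\ge n\mathcal{W}_1(1+\rho t)^{n-1}$ with $\rho=\mathcal{W}_2/\mathcal{W}_1$, and integration together with the homogeneity $\mathcal{W}_i(sK)=s^{n-i}\mathcal{W}_i(K)$ rewrites the result in the stated rescaled form. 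The case $p=2=n-1$ reduces to \eqref{eMC} after one more Alexandrov--Fenchel step.
\end{itemize}
Equality forces equality in Alexandrov--Fenchel, which happens only for balls.

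\textbf{Main obstacle.} The hardest step is the rigorous two-way equivalence in (i): establishing (IP)$\Rightarrow$(IC) requires the level-set convexity and the careful coarea/Hölder argument. The main technical point there is controlling the regularity of $u$ at critical points. I would try first to avoid critical points by using the fact that, for convex $K$, $|\nabla u|>0$ outside $K$.
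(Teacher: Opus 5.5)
Your part (ii) fails at its central step, because both inequalities there point the wrong way.

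First, the functional $A\mapsto\big(\int_0^\infty A(t)^{-\frac1{p-1}}\,dt\big)^{1-p}$ in your H\"older bound (this is \eqref{e114}) is \emph{increasing} in $A$. A larger $A$ makes $A^{-1/(p-1)}$ smaller, and $1-p<0$. So a lower bound $A\ge A_\ast$ gives no upper bound on ${\mathsf{H^l}}_p(K)$; you need $A\le A_\ast$.

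Second, the Alexandrov--Fenchel inequalities you invoke are reversed. By \eqref{e48} (the paper's \eqref{blue1}), $\mathcal{W}_k(K)\le\upsilon_n R^{n-k}$ with $R=\upsilon_n^{-1}\mathcal{W}_{n-1}(K)$: at fixed mean width the ball \emph{maximizes} every quermassintegral. Likewise, the log-concavity $\mathcal{W}_j^2\ge\mathcal{W}_{j-1}\mathcal{W}_{j+1}$ from \eqref{e44} makes $\mathcal{W}_{j+1}/\mathcal{W}_j$ nonincreasing, so $\mathcal{W}_{j+1}\le\mathcal{W}_1(\mathcal{W}_2/\mathcal{W}_1)^j$. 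A cylinder in $\mathbb R^3$ of fixed length and radius $\varepsilon\to0$ violates both of your claims. Its $\mathcal{W}_1\to0$ while $\upsilon_3R^2$ stays positive, and $\mathcal{W}_2^2/\mathcal{W}_1\to\infty>\upsilon_3=\mathcal{W}_3$. As written, neither line of \eqref{e113} follows.

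Reversing both directions repairs the argument. You get $A(t)\le\sigma_{n-1}(R+t)^{n-1}$, respectively $A(t)\le n\mathcal{W}_1(K)(1+\rho t)^{n-1}$ with $\rho=\mathcal{W}_2/\mathcal{W}_1$. Monotonicity then gives $\sigma_{n-1}\big(\frac{n-p}{p-1}\big)^{p-1}R^{n-p}$, respectively $n\mathcal{W}_1\rho^{p-1}\big(\frac{n-p}{p-1}\big)^{p-1}=n\big(\frac{n-p}{p-1}\big)^{p-1}\mathcal{W}_1^{2-p}\mathcal{W}_2^{p-1}$. This is then the paper's proof. The only difference is that the paper inserts the ball profile $\phi(t)=(1+t/r)^{-\frac{n-p}{p-1}}$, with $r=\mathcal{W}_1/\mathcal{W}_2$ or $r=\upsilon_n^{-1}\mathcal{W}_{n-1}$, instead of optimizing $\phi$. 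That profile is optimal for the majorant, so the final bounds coincide.

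For the equality case, be precise. Equality forces $A\equiv A_\ast$, in particular $\upsilon_n=\mathcal{W}_n=\mathcal{W}_1(\mathcal{W}_2/\mathcal{W}_1)^{n-1}$. That is equality along the whole log-concave chain, including $\mathcal{W}_{n-1}^2=\mathcal{W}_{n-2}\mathcal{W}_n$, and this last one forces a ball. An intermediate equality such as $\mathcal{W}_1^2=\mathcal{W}_0\mathcal{W}_2$ alone would not, since cap bodies attain it in $\mathbb R^3$. For $p=2=n-1$ no extra Alexandrov--Fenchel step is needed, because $\mathcal{W}_1^{2-p}\mathcal{W}_2^{p-1}=\mathcal{W}_2=\mathcal{W}_{n-1}$.

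In (i), your level-set integration for (IP)$\Rightarrow$(IC) is sound and more standard than the paper's single crossing level $t_\star$. However, your $p\to1^+$ route for (IC)$\Rightarrow$(IP) uses (IC) for all exponents near $1$, not for the fixed $p$ of the statement. The paper stays at fixed $p$ by chaining (IC) with \eqref{e416}, which follows from the first line of \eqref{e113} together with $\mathcal{W}_1^2\ge\mathcal{W}_0\mathcal{W}_2$. That route is available to you once (ii) is repaired.
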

\bigskip

\subsection{Structure of the paper}\label{s14}
Based on such a limiting process that if
$(K,\Omega)$ is a smooth conductor in $\mathbb R^n,$ then
$$
\begin{cases}
{{{\mathsf{H^d}}}}_{p,\infty}(K,\Omega)=\underset{{\lambda}\to\infty}{\lim}{{{\mathsf{H^d}}}}_{p,{\lambda}}(K,\Omega)=\underset{f\in C^1_0(\overline{\Omega})\ \&\ f\big|_{K}=1}{\inf}\left(\int_{\Omega}|\nabla f|^p\,d\mathcal{L}^{n}\right);\\

{{{\mathsf{H^d}}}}_{\Delta_p,\infty}(K,\Omega)=\underset{{\lambda}\to\infty}{\lim}{{{\mathsf{H^d}}}}_{\Delta_p,{\lambda}}(K,\Omega)=\underset{
	f\in C^2_0(\overline{\Omega}),\ \& \ f\big|_K=1}{\inf}2^{-1}\left(\int_{\Omega}|\Delta_p f|\,d\mathcal{L}^{n}\right),
\end{cases}
$$
whose physical interpretation ensures a very natural change from heat dispersion to heat loss (cf. \eqref{e13}):
$$
\begin{cases}
{{{\mathsf{H^d}}}}_{p,\infty}(K,\Omega)={{\mathsf{H^l}}}_p(K,\Omega)={{\mathsf{H^l}}}_{\Delta_p}(K,\Omega)={{{\mathsf{H^d}}}}_{\Delta_p,\infty}(K,\Omega);\\
\begin{tikzpicture}[scale=2.13]

% Shaded heat distribution
\shade[inner color=red!80, outer color=white] (0,0) circle (2);

% Heat flow arrows
\foreach \angle in {0,45,...,315}
{
	\draw[->, thick] ({0.5*cos(\angle)}, {0.5*sin(\angle)})
	-- ({1.8*cos(\angle)}, {1.8*sin(\angle)});
}

\node at (0,-2.2) {};

\end{tikzpicture}.
\end{cases}
$$
we will not only prove but also deepen Theorems \ref{t11}-\ref{t12} in \S\ref{s3}-\S\ref{s4} respectively.

Last but not least, in complement to Theorems \ref{t11}-\ref{t12}, the final Appendix A is enclosed to especially resolve the minimizing problem
$$
\underset{K\in \mathscr{K}^n\ \&\ \mathcal{W}_j( K)\ge {\mathcal W}_j(\overline{\mathbb B^n})}{\inf}\frac{\left(\frac{\mathcal{W}_j(K)}{\mathcal{W}_j(\overline{\mathbb B^n})}\right)^\frac1{n-j}}{\left(\frac{\int_K h\,d\mathcal{L}^n}{\|h\|_{L^1(\mathbb R^n)}}\right)^\frac{1}{n}}\ \ \forall\ \ j\in\{0,1,...,n-1\}\ \ \&\ \ h\ge 0\  \ \text{with}\ \ \|h\|_{L^1(\mathbb R^n)}>0,
$$
through certain essentials of the quermassintegrals within \eqref{e17}.

\section{Demonstration for Theorem \ref{t11}}\label{s3}
\setcounter{equation}{0}

\subsection{Proof of Theorem \ref{t11}} This consists of three parts.

\subsubsection*{Validity of (i)} Thanks to the basic fact that any $K\in\mathscr{K}^n$ can be approximated in the Hausdorff distance $\hbox{dist}_H$ by $\mathscr{K}^n$'s members with smooth boundaries, it is enough to validate \eqref{e18} for any smooth conductor $(K,\Omega)$ in $\mathbb R^n$ according to the following three phases which optimize the argument for \cite[Theorem 1.1]{JX1} over smooth compact manifolds.

\begin{itemize}
 \item First of all, it is plain to check
	$$
	\begin{cases}
	{{{\mathsf{H^d}}}}_{p,{\lambda}}(K,\Omega) =\underset{f\in \mathfrak{C}^{0,1}(K,\Omega)}{\inf}\left(\int_{\Omega\setminus K}|\nabla f|^p\,d\mathcal{L}^{n}+{\lambda}\int_{\partial\Omega}|f|^{p}\,d\mathcal{H}^{n-1}\right);\\
	{{{\mathsf{H^d}}}}_{\Delta_p,{\lambda}}(K,\Omega) =\underset{f\in \mathfrak{C}^{1,1}_{{\lambda},p}(K,\Omega)}{\inf}2^{-1}\left(\int_{\Omega\setminus K}|\Delta_p f|\,d\mathcal{L}^{n}+{\lambda}\int_{\partial\Omega}|f|^{p-1}\,d\mathcal{H}^{n-1}\right),
	\end{cases}
	$$
	where
	$$
	\begin{cases} \mathfrak{C}^{0,1}(K,\Omega)=\Big\{f\in C^{0,1}(\overline{\Omega\setminus K}):\ f|_{\partial K}=1\Big\};\\
	\mathfrak{C}^{1,1}_{{\lambda},p}(K,\Omega)=\Bigg\{f\in C^{1,1}(\overline{\Omega\setminus K}):\  f\big|_{\partial K}-1=|\nabla f|\big|_{\partial K}=\bigg({|\nabla f|^{p-2}}{(\frac{\partial f}{\partial\nu})}+{{\lambda} f}{|f|^{p-2}}\bigg)\Bigg|_{\partial\Omega}=0\Bigg\}.
	\end{cases}
	$$

\item Next, without loss of generality, we may assume that each function $f$ in the above-defined classes satisfies that $0\leq f\leq 1$ - otherwise - we can consider $-f$ or $2-f$ instead. In the sequel, we prove \eqref{e18} according to two circumstances.
	\begin{itemize}
		\item On the one hand, if
		$$
		f\in \mathfrak{C}^{1,1}_{{\lambda},p}(K,\Omega)\subseteq\mathfrak{C}^{0,1}(K,\Omega),
		$$
		then
		$$
		\begin{aligned}
		\int_{\Omega\setminus K}|\Delta_p f|\,d\mathcal{L}^{n} &\geq \int_{\Omega\setminus K}(1-2f) \Delta_p f\,d\mathcal{L}^{n}
		\\ &=\int_{\partial(\Omega\setminus K)}\left(\frac{(1-2f)\frac{\partial f}{\partial\nu}}{|\nabla f|^{2-p}}\right)\,d\mathcal{H}^{n-1}-\int_{\Omega\setminus K}\left(\frac{\big(\nabla(1-2f)\big)\cdot\nabla f}{|\nabla f|^{2-p}}\right)\,d\mathcal{L}^{n}
		\\ &=\int_{\partial \Omega}(1-2f)\Big(-{\lambda}|f|^{p-2}f\Big)\,d\mathcal{H}^{n-1}+2\int_{\Omega\setminus K}|\nabla f|^p\,d\mathcal{L}^{n}
		\\ &=2\Bigg(\int_{\Omega\setminus K}|\nabla f|^p\,d\mathcal{L}^{n}+{\lambda}\int_{\partial \Omega} |f|^p\,d\mathcal{H}^{n-1}\Bigg)-{\lambda}\int_{\partial \Omega} |f|^{p-1}\,d\mathcal{H}^{n-1}.
		\end{aligned}$$
		This in turn derives
		\begin{equation}
		\label{e31}
		{{{\mathsf{H^d}}}}_{\Delta_p,{\lambda}}(K,\Omega) \geq {{{\mathsf{H^d}}}}_{p,{\lambda}}(K,\Omega).
		\end{equation}
		\item On the other hand, thanks to $p-1,{\lambda}\in (0,\infty)$, there is always a minimizer $u_\dagger$ for ${{{\mathsf{H^d}}}}_{p,{\lambda}}$ such that (cf. \cite[(1.2)]{AC} \& \cite[(3.2)]{Ba})
		\begin{equation}
		\label{e32}
		\begin{cases}
		u_\dagger=1&\ \ \text{in}\ \ K;\\
		\Delta_p u_\dagger=0&\ \ \text{in}\ \ \Omega\setminus K;\\
		|\nabla u_\dagger|^{p-2}\frac{\partial u_\dagger}{\partial\nu}+{\lambda}|u_\dagger|^{p-2}u_\dagger=0&\ \ \text{on}\ \ \partial\Omega\setminus \partial K,
		\end{cases}
		\end{equation}
		where \eqref{e32}'s third equation is understood under the weak sense that
for any $\phi\in W^{1,p}(\Omega)$ and $\phi\big|_K=0,$
		\begin{equation}
		\label{e33}
		\int_{\Omega}|\nabla u_\dagger|^{p-2}\nabla u_\dagger\cdot\nabla \phi\,d\mathcal{L}^n+{\lambda}\int_{\partial\Omega}|u|^{p-2}u\phi\,d\mathcal{H}^{n-1}=0.
		\end{equation}
		
	Moreover, for a suitablly small number $\delta>0$ let's choose
		\begin{equation}\label{e34}
		\begin{cases}
		f\in C^2[0,1];\\
        f'\geq 0;\\
		 f\big|_{[0,1-\delta]}=1;\\
		 f'(1-\delta)=f''(1-\delta)=f(1)=f'(1)=0;
        \\ w=h(u_\dagger)=1-(1-u_\dagger)f(u_\dagger).
		 \end{cases}
		 \end{equation}
		Then
		$$
		\begin{cases}
		\dot{h}(u_\dagger)=h'(u_\dagger)=tf'(t)\big|_{t=u_\dagger}\geq 0;\\
		\Delta_pw=\dot{h}^{p-1}\Delta_p u_\dagger+(\dot{h}^{p-1})'|\nabla u_\dagger|^p=(\dot{h}^{p-1})'|\nabla u_\dagger|^p\ \ \text{in}\ \ \Omega\setminus K.
		\end{cases}
		$$
		Upon noticing that
		$$
		w=u_\dagger\ \ \text{when}\ \ u_\dagger\leq 1-\delta,
		$$
		we have
		\begin{equation}\label{e35}
		\int_{\Omega\setminus K}|\Delta_p w|\,d\mathcal{L}^{n} = \int_{\{1-\delta<u_\dagger<1\}}  |\Delta_p w|\,d\mathcal{L}^{n}
		=\int_{1-\delta}^1\big|(\dot{h}^{p-1})'(t)\big|\left(\int_{\{u_\dagger=t\}}|\nabla u_\dagger|^{p-1}\,d\mathcal{H}^{n-1}\right)\,dt,
		\end{equation}
		whence making a two-fold treatment.
		\begin{itemize}
		
		\item On the one hand, letting $t$ be sufficiently close to $1$, along with \eqref{e32}'s second \& third conditions, ensures
		\begin{equation}\label{e36}
	\int_{\{u_\dagger=t\}}|\nabla u_\dagger|^{p-1}\,d\mathcal{H}^{n-1} =\int_{\Omega\cap\{u<t\}}\Delta_p u_\dagger\,d\mathcal{L}^{n}-\int_{\partial\Omega}|\nabla u|^{p-2}\left(\frac{\partial u_\dagger}{\partial\nu}\right)\,d\mathcal{H}^{n-1}
		={\lambda}\int_{\partial\Omega}  |u_\dagger|^{p-2}u_\dagger d\mathcal{H}^{n-1}.
		\end{equation}
		\item On the other hand, we can select a special function $f$ such that
		\begin{equation}
		\label{e37}
		\int_{1-\delta}^1\big|(\dot{h}^{p-1})'(t)\big|\,dt\to 1\ \ \text{as}\ \ \delta\to 0.
		\end{equation}
		As a matter of fact, given a sufficiently small $\varepsilon>0,$ let
		$$
		\begin{cases} \delta=\frac{2\varepsilon^2\pi}{1-2\varepsilon}+\varepsilon\pi;\\
		h''(t)=\begin{cases}
		0&\forall\ \ t\in \Big[0,1- \frac{2\varepsilon^2\pi}{1-2\varepsilon}-\varepsilon\pi\Big); \\
		\sin\frac{1}{\varepsilon}\Big(t-1+\frac{2\varepsilon^2\pi}{1-2\varepsilon}+\varepsilon\pi\Big)&\forall\ \ t\in \Big[1- \frac{2\varepsilon^2\pi}{1-2\varepsilon}-\varepsilon\pi,1-\frac{2\varepsilon^2\pi}{1-2\varepsilon}\Big);\\ \left(\frac{1+2\varepsilon}{2}\right)\left(\frac{2\varepsilon-1}{2\varepsilon^2}\right)\sin\frac{1-2\varepsilon}{2\varepsilon^2}
		\Big(t-1+\frac{2\varepsilon^2\pi}{1-2\varepsilon}\Big) &\forall\ \ t\in \Big[1-\frac{2\varepsilon^2\pi}{1-2\varepsilon},1\Big].
		\end{cases}
		\end{cases}
		$$
		Then
		$$\begin{aligned}
		h'(t)&=1+\int_0^t h''(s)ds
		\\&=\begin{cases}
		1&\forall\ \  t\in \Big[0,1- \frac{2\varepsilon^2\pi}{1-2\varepsilon}-\varepsilon\pi\Big); \\
		1+\varepsilon\Big(1-\cos\big(\frac{t-1}{\varepsilon}+\frac{2\varepsilon\pi}{1-2\varepsilon}+\pi\big)\Big)&\forall\ \ t\in \Big[1- \frac{2\varepsilon^2\pi}{1-2\varepsilon}-\varepsilon\pi,1-\frac{2\varepsilon^2\pi}{1-2\varepsilon}\Big);\\
		1+2\varepsilon+\left(\frac{1+2\varepsilon}{2}\right)\Bigg(-1+\cos\Big(
		\frac{t-1+\frac{2\varepsilon^2\pi}{1-2\varepsilon}}{\big(\frac{1-2\varepsilon}{2\varepsilon^2}\big)^{-1}}\Big)\Bigg)& \forall\ \ t\in\Big[1-\frac{2\varepsilon^2\pi}{1-2\varepsilon},1\Big].
		\end{cases}
		\end{aligned}
		$$
		Consequently, we read off that
			\begin{equation}\label{e38}
		\begin{cases} [0,1]\ni t\mapsto h(t)=\int_0^t h'(s)ds\ \ \text{is in $C^2[0,1]$};\\
		h(t)\big|_{[0,1-\delta]}=t;\\
		h'(1-\delta)=1;\\
		h''(1-\delta)=0;\\
		h'(1)=0=h''(1);\\
		h(1)=\int_0^{1-\frac{2\varepsilon^2\pi}{1-2\varepsilon}-\varepsilon\pi} 1 dt+\int_{1-\frac{2\varepsilon^2\pi}{1+\varepsilon}-\varepsilon\pi}^{1-\frac{2\varepsilon^2\pi}{1-2\varepsilon}}(1+\varepsilon)dt+
		\int_{1-\frac{2\varepsilon^2\pi}{1-2\varepsilon}}^1\Big(\frac{1+2\varepsilon}{2}\Big)dt=1.
		\end{cases}
\end{equation}
		Obviously, \eqref{e38} is equivalent to \eqref{e34}. Accordingly, there holds the required limiting process \eqref{e37}:
		$$\begin{aligned}
		\int_{1-\delta}^1\big|(\dot{h}^{p-1})'(t)\big|\,dt &=(\dot{h}^{p-1})(t)\big|^{1-\frac{2\varepsilon^2\pi}{1-2\varepsilon}}_{1- \frac{2\varepsilon^2\pi}{1-2\varepsilon}-\varepsilon\pi}+(\dot{h}^{p-1})(t)\big|^{1-\frac{2\varepsilon^2\pi}{1-2\varepsilon}}_{1}
		\\ &=2(1+2\varepsilon)^{p-1}-1
		\\ &\rightarrow 1 \ \ \text{as}\ \ \varepsilon \rightarrow 0\ \ \text{or}\ \ \delta\to 0.
		\end{aligned}
		$$
		\end{itemize}
		
		Now, for a sufficiently small $\varepsilon>0,$ we have
		$$
		\begin{cases} w\in\mathfrak{C}^{1,1}_{{\lambda},p}(K,\Omega);\\ w|_{\partial\Omega}=u_\dagger|_{\partial\Omega},
		\end{cases}
		$$
		thereby utilizing \eqref{e35}-\eqref{e36}-\eqref{e37} to deduce
		$$\begin{aligned}
		2{{{\mathsf{H^d}}}}_{\Delta_p,{\lambda}}(K,\Omega) &\leq \int_{\Omega\setminus K}|\Delta_p w|\,d\mathcal{L}^{n}+{\lambda}\int_{\partial\Omega}|w|^{p-2}w\,d\mathcal{H}^{n-1}
		\\ & =2(1+2\varepsilon)^{p-1}{\lambda}\int_{\partial\Omega}|u_\dagger|^{p-2}u_\dagger\,d\mathcal{H}^{n-1}
		\\ & =2(1+2\varepsilon)^{p-2}{{{\mathsf{H^d}}}}_{p,{\lambda}}(K,\Omega)
		\end{aligned}$$
		where the last equality (cf. \cite[(3.3)]{Ba}) follows from using not only the test function
		$\phi=u_\dagger-1$ within \eqref{e33} but also $u_\dagger$'s minimizing property
		$$
		{{{\mathsf{H^d}}}}_{p,{\lambda}}(K,\Omega)=\int_{\Omega}|\nabla u_\dagger|^p\,d\mathcal{L}^n+{\lambda}\int_{\partial\Omega}|u_\dagger|^p\,d\mathcal{H}^{n-1}.
		$$
		Upon sending $\varepsilon\rightarrow 0$ in the last estimation, we obtain
		\begin{equation}
		\label{e39}
		{{{\mathsf{H^d}}}}_{\Delta_p,{\lambda}}(K,\Omega)  \leq {{{\mathsf{H^d}}}}_{p,{\lambda}}(K,\Omega).
		\end{equation}
		\end{itemize}
	
\item Finally, a combination of \eqref{e31}\&\eqref{e39} derives \eqref{e18} for any smooth conductor $(K,\Omega)$ in $\mathbb R^n$, thereby verifying \eqref{e18}'s equality case.

\end{itemize}

\subsubsection*{Validity of (ii)} Due to the just-proven law \eqref{e18}, we are only required to prove \eqref{e19} for ${{{\mathsf{H^d}}}}_{p,{\lambda}}$. However, upon noting that \eqref{e19}'s setting for ${{{\mathsf{H^d}}}}_{p,{\lambda}}$ amounts to \cite[Theorem 5.1]{Ba} which generalizes \cite[Theorem 4.1]{DNT}, we are about to debilitate the strongest hypothesis \eqref{eMW} through
reformulating \cite[Theorem 5.1]{Ba}'s proof in the sequel.
\begin{itemize}
	\item Firstly,
	since $K^\ast$ within the last equation is a closed ball, we may not only choose a nonnegative radial minimizer $u_{\dagger,\ast}$ for $${{{\mathsf{H^d}}}}_{p,{\lambda}}(K^\ast, K^\ast+t\mathbb B^n)\ \ \text{connecting to \eqref{e32} for $u_\dagger$}
	$$ but also define
	$$
	\begin{cases}
	r_\ast=\text{the radius of $K^\ast$};\\
	s_\ast=u_{\dagger,\ast}(r_\ast+t)=\underset{K^\ast+t\mathbb B^n}{\min}u_{\dagger,\ast};\\
	u_{\dagger,\ast}\big|_{\overline{\mathbb B^n_{r_\ast}}}=1.
	\end{cases}
	$$
	Upon noticing that $u_{\dagger,\ast}$ is radial, we have
	$$
	|\nabla u_{\dagger,\ast}|=\text{constant on the level sets of $u_{\dagger,\ast}$}.
	$$
	\item Secondly, via defining
	$$
	\begin{cases}
	g(s)=|\nabla u_{\dagger,\ast}|\big|_{\{u_{\dagger,\ast}=s\}} &\forall\ \ s\in (s_\ast, 1];\\
	{\rm d}(x)=\underset{y\in K}{\inf}|x-y|=\text{dist}(x,K)&\forall\ \ x\in K+t\mathbb B^n;\\
	w(x)=G\big(r_\ast+{\rm d}(x)\big)&\forall\ \ x\in K+t\mathbb B^n;\\
	G^{-1}(s)=r_\ast+\int_{s}^1\big(g(\lambda)\big)^{-1}\,d\lambda&\forall\ \ s\in (s_\ast, 1],
	\end{cases}
	$$
	we get that not only $G$ is a decreasing function but also
	$$
	\begin{cases}
	w\in W^{1,p}(K+t\mathbb B^n);\\
	\underset{K+t\mathbb B^n}{\max}w=w\big|_{K}=G(r_\ast)=1;\\
	\underset{K+t\mathbb B^n}{\min}{w}=w\big|_{\partial(K+t\mathbb B^n)}=G(r_\ast+t);\\
	|\nabla w|\big|_{\{w=s\}}=|\nabla u_{\dagger,\ast}|\big|_{\{u_{\dagger,\ast}=s\}}=g(s)\ \ \forall\ \ s\in \Big[ \underset{K+t\mathbb B^n}{\min}{w},1\Big].
	\end{cases}
	$$
	Accordingly, there holds
	\begin{equation}
	\label{e311}
	{{{\mathsf{H^d}}}}_{p,{\lambda}}(K, K+t\mathbb B^n)\le \int_{(K+t\mathbb B^n)\setminus K}|\nabla w|^p+{\lambda}\int_{\partial(K+t\mathbb B^n)}|w|^p\,d\mathcal{H}^{n-1}.
	\end{equation}
	
	\item Thirdly, upon not only letting
	$$
	\begin{cases}
	E_s=\big\{x\in K+t\mathbb B^n:\ w(x)>s\big\}=\big\{x\in K+t\mathbb B^n:\ {\rm d}(x)<G^{-1}(s)\big\}=K+G^{-1}(s)\mathbb B^n;\\
	F_s=\big\{x\in K^\ast+t\mathbb B^n:\ u_{\dagger,\ast}(x)>s\big\}=K^\ast+G^{-1}(s)\mathbb B^n,
	\end{cases}
	$$
	but also using (cf. \eqref{e21})
		\begin{equation}
		\label{blue1}
		\begin{cases}
		\mathcal{W}_{n-1}(K)=\mathcal{W}_{n-1}(K^\ast);\\
		\left(\frac{\mathcal{W}_j(K^\ast)}{\mathcal{W}_j(\overline{\mathbb B^n})}\right)^\frac1{n-j}=\frac{\mathcal{W}_{n-1}(K^\ast)}{\mathcal{W}_{n-1}(\overline{\mathbb B^n})}=\frac{\mathcal{W}_{n-1}(K)}{\mathcal{W}_{n-1}(\overline{\mathbb B^n})}\ge \left(\frac{\mathcal{W}_{j}(K)}{\mathcal{W}_{j}(\overline{\mathbb B^n})}\right)^\frac1{n-j}&\ \ \forall\ \ j\in\{0,...,n-1\};\\
		\mathcal{W}_j(K)\le\mathcal{W}_{j}(K^\ast)&\ \ \forall\ \ j\in\{0,...,n\},
		\end{cases}
		\end{equation}
	we obtain that if
	$$
	\underset{K+t\mathbb B^n}{\min}{w}<s\le 1\ \ \&\ \ \rho=G^{-1}(s),
	$$
	then \eqref{e17}'s first formula gives {
		\begin{equation}
		\label{blue2}
		\mathcal{H}^{n-1}(\partial E_s)=n\sum_{k=0}^{n-1} \binom{n-1}{k}{\mathcal W}_{k+1}(K)\rho^k\le n\sum_{k=0}^{n-1} \binom{n-1}{k}{\mathcal W}_{k+1}(K^\ast)\rho^k=\mathcal{H}^{n-1}(\partial F_s).
		\end{equation}}
	This \eqref{blue2} in turn implies that if $\underset{K+t\mathbb B^n}{\min}{w}<s\le 1$ then
	\begin{align*}
	\int_{\{w=s\}}\frac{d\mathcal{H}^{n-1}}{|\nabla w|^{1-p}}=\big(g(s)\big)^{p-1}\mathcal{H}^{n-1}(\partial E_s)\le \big(g(s)\big)^{p-1}\mathcal{H}^{n-1}(\partial F_s)=
	\int_{\{u_{\dagger,\ast}=s\}}\frac{d\mathcal{H}^{n-1}}{|\nabla u_{\dagger,\ast}|^{1-p}}.
	\end{align*}
	\item Fourthly, an application of the co-area formula derives {
		\begin{equation}\label{blue3}
		\int_{(K+t\mathbb B^n)\setminus K}\frac{d\mathcal{L}^{n}}{|\nabla w|^{-p}}=\int_{\underset{K+t\mathbb B^n}{\min}{w}}^1\frac{\mathcal{H}^{n-1}(\partial E_s)\,ds}{\big(g(s)\big)^{1-p}}
		\le \int_{\underset{K+t\mathbb B^n}{\min}{w}}^1\frac{\mathcal{H}^{n-1}(\partial F_s)\,ds}{\big(g(s)\big)^{1-p}}\le
		\int_{(K^\ast+t\mathbb B^n)\setminus K^\ast}\frac{d\mathcal{L}^{n}}{|\nabla u_{\dagger,\ast}|^{-p}}.
		\end{equation}}
	Meanwhile, upon employing (cf. \eqref{blue2})
	$$
	\begin{cases}
	w|_{\partial(K+t\mathbb B^n)}=\underset{K+t\mathbb B^n}{\min}{w}=\underset{K+t\mathbb B^n}{\min}{u_{\dagger,\ast}};\\
	\mathcal{H}^{n-1}(\partial(K+t\mathbb B^n))\le \mathcal{H}^{n-1}(\partial(K^\ast+t\mathbb B^n)),
	\end{cases}
	$$
	we achieve {
		\begin{equation}\label{blue4}
		\int_{\partial(K+t\mathbb B^n)}\frac{d\mathcal{H}^{n-1}}{|w|^{-p}}=\frac{
			\Big(\underset{K+t\mathbb B^n}{\min}{|w|}\Big)^{p}}{\big(\mathcal{H}^{n-1}(\partial(K+t\mathbb B^n))\big)^{-1}}
		\le\frac{\Big(\underset{K+t\mathbb B^n}{\min}{|u_{\dagger,\ast}}|\Big)^{p}}{\big(\mathcal{H}^{n-1}(\partial(K^\ast+t\mathbb B^n))\big)^{-1}}
		=\int_{\partial(K^\ast+t\mathbb B^n)}\frac{d\mathcal{H}^{n-1}}{|u_{\dagger,\ast}|^{-p}}.
		\end{equation}}
	
	\item Fifthly, a combination of \eqref{e311} \& \eqref{blue3}-\eqref{blue4} derives \eqref{e19}'s first inequality:
	\begin{align*}
	{{{\mathsf{H^d}}}}_{p,{\lambda}}\big(K,K+t{\mathbb B}^n\big)&\le
	\int_{(K+t\mathbb B^n)\setminus K}|\nabla w|^p+{\lambda}\int_{\partial(K+t\mathbb B^n)}|w|^p\,d\mathcal{H}^{n-1}\\
	&\le \int_{(K^\ast+t\mathbb B^n)\setminus K^\ast}|\nabla u_{\dagger,\ast}|^p\,d\mathcal{L}^{n}+{\lambda}\int_{\partial(K^\ast+t\mathbb B^n)}|u_{\dagger,\ast}|^p\,d\mathcal{H}^{n-1}\\
	&={{{\mathsf{H^d}}}}_{p,{\lambda}}\big(K^\ast,K^\ast+t{\mathbb B}^n\big).
	\end{align*}
	\end{itemize}
	
	Now, a careful examination of the foregoing \eqref{blue1}-\eqref{blue2}-\eqref{blue3}-\eqref{blue4}, along with \cite[Theorem 4.1]{Ba} for $p>1=n-1$ \& \cite[Theorem 3.1]{DNT} for $p=2=n$, reveals that {if
		there exists the following surface area restriction (weaker than \eqref{eMW})
		$$
		\mathcal{W}_1(K+t\mathbb B^n)=n\mathcal{H}^{n-1}(\partial(K+t\mathbb B^n))\le n\mathcal{H}^{n-1}(\partial(K^\ast+t\mathbb B^n))=\mathcal{W}_1(K^\ast+t\mathbb B^n)\ \ \forall\ \ t\in[0,\lambda],
		$$
		then \eqref{blue3}-\eqref{blue4} hold, thereby implying
		\begin{align*}
		{{{\mathsf{H^d}}}}_{\Delta_p,{\lambda}}\big(K,K+t{\mathbb B}^n\big)={{{\mathsf{H^d}}}}_{p,{\lambda}}\big(K,K+t{\mathbb B}^n\big)\le {{{\mathsf{H^d}}}}_{p,{\lambda}}\big(K^\ast,K^\ast+t{\mathbb B}^n\big)={{{\mathsf{H^d}}}}_{\Delta_p,{\lambda}}\big(K^\ast,K^\ast+t{\mathbb B}^n\big),
		\end{align*}
	}
with its inequality becoming an equality iff $K$ is a closed ball.

\subsection{More about quasilinear heat dispersion}\label{s32} Below is a further two-fold observation on \eqref{e18}\&\eqref{e19}.
	\begin{itemize}
		
		\item On the one hand, as a nice complement to \eqref{e19}, we have that if $$
		\begin{cases}
		K, \overline{\Omega}\in \mathscr{K}^n;\\
		{\lambda}^\frac1{p-1}>\frac{n-p}{p-1};\\
		\mathbb B^n_r=\big\{x\in\mathbb R^n:\ |x|<r\big\};\\ \mathcal{W}_0(K)=\upsilon_n<\upsilon\le\mathcal{W}_0(\overline{\Omega}),
		\end{cases}
		$$ then there is
		\begin{equation}
		\label{e110}
		{{{\mathcal H}}}_{\Delta_p,{\lambda}}\big(K,\Omega\big)\ge {{{\mathsf{H^d}}}}_{\Delta_p,{\lambda}}\big(\overline{\mathbb B^n}, {\mathbb B}^n_r\big)\ \ \forall\ \ r\in\Bigg\{1,  \bigg(\frac{\upsilon}{\upsilon_n}\bigg)^\frac1n\Bigg\}.
		\end{equation}
		Thanks to \eqref{e18}, we are only required to check \eqref{e110}'s situation for ${{{\mathsf{H^d}}}}_{p,{\lambda}}$. However, this is equivalent to \cite[Theorem 1.1]{AC} which has been extended to \cite[Theorem 1.1]{ChW} for complete Riemannian manifolds, so the proof's detail is omitted here.

\item On the other hand, we can utilize \eqref{e110} (being comparable volumetrically to \eqref{e29}) to get that if
$$
\begin{cases}
\mathcal{L}^n(K)=\mathcal{W}_0(K)=\upsilon_n;\\
{\Omega}=K+t{\mathbb B}^n;\\
\mathcal{L}^n(\overline{\Omega})=\mathcal{W}_0(\overline{\Omega})\ge\upsilon>\upsilon_n;\\
\lambda\in \bigg\{0,  \Big(\frac{\upsilon}{\upsilon_n}\Big)^\frac1n-1\bigg\},
\end{cases}
$$
then
\begin{align*}
{{{\mathsf{H^d}}}}_{\Delta_p,{\lambda}}\big(K,K+t{\mathbb B}^n\big)=
{{{\mathsf{H^d}}}}_{p,{\lambda}}\big(K,K+t{\mathbb B}^n\big)\ge {{{\mathsf{H^d}}}}_{p,{\lambda}}\big(\overline{\mathbb B^n},(1+\lambda){\mathbb B}^n\big)={{{\mathsf{H^d}}}}_{\Delta_p,{\lambda}}\big(\overline{\mathbb B^n},(1+\lambda){\mathbb B}^n\big),
\end{align*}
which may be regarded as a resolution to \cite[Open Problem 2]{DNT} for $p=2$.
\end{itemize}

\section{Demonstration for Theorem \ref{t12}}\label{s4}

\subsection{Proof of Theorem \ref{t12}}\label{s41} This consists of two parts.
	
	\subsubsection*{Validity of (i)}\label{s41c} As a matter of fact, validating \eqref{e417e} amounts to proving that the sharp isocapacitary inequality ({\rm{IC}})
	\begin{equation}
	\label{e417}
	\left(\frac{{{\mathsf{H^l}}}_{p}(K)}{{{\mathsf{H^l}}}_{p}(\overline{\mathbb{B}^n})}\right)^\frac{1}{n-p}\ge\left(\frac{\mathcal{L}^n(K)}{\upsilon_n}\right)^\frac{1}{n}
	\ \ \forall\ \ K\in\mathscr{K}^n
	\end{equation}
	is equivalent to  the sharp isoperimetric inequality ({\rm{IP}})
	\begin{equation}
	\label{e418}
	\left(\frac{\mathcal{H}^{n-1}(\partial K)}{\sigma_{n-1}}\right)^\frac1{n-1}\ge\left(\frac{\mathcal{L}^n(K)}{\upsilon_n}\right)^\frac{1}{n}\ \ \forall\ \ K\in\mathscr{K}^n.
	\end{equation}

\begin{itemize}	
\item On the one hand,	if  \eqref{e417} is valid, then upon connecting either \eqref{e113} or \eqref{e416} to \eqref{e417}, we reach
	$$
	\left(\frac{n\big(\frac{n-p}{p-1}\big)^{p-1}\big(n^{-1}\mathcal{H}^{n-1}(\partial K)\big)^{2(p-1)}}{{\mathsf{H^l}}_p(\overline{\mathbb B^n})\big({\mathcal L}^n(K)\big)^{p-1}}
	\right)^\frac{1}{n-p}\ge	\left(\frac{{{\mathsf{H^l}}}_{p}(K)}{{{\mathsf{H^l}}}_{p}(\overline{\mathbb{B}^n})}\right)^\frac{1}{n-p}\ge\left(\frac{\mathcal{L}^n(K)}{{\upsilon_n}}\right)^\frac{1}{n},
	$$
	which is simplified to \eqref{e418}.
	
\item On the other hand, if \eqref{e418} is valid, then, without loss of generality, we may assume
	$$
	\begin{cases}
	\overline{\mathbb B^n_r}=r\overline{\mathbb B^n}\supset K\cup\overline{\mathbb B^n_\rho}=K\cup{\rho\overline{\mathbb B^n}}\ \ \forall\ \ 0<\rho\ll r<\infty;\\
	\text{$K\in\mathscr{K}_n$ with $\mathcal{L}^n(K)=\mathcal{L}^n(\overline{\mathbb B^n_\rho})$};\\
	\text{$u$ be the $p$-equilibrium potential of ${{\mathsf{H^l}}}_{p}\big(K,\mathbb B^n_r\big)$};\\
	\text{$u_\rho$ be the $p$-equilibrium potential of  ${{\mathsf{H^l}}}_{p}\big(\overline{\mathbb B^n_\rho},\mathbb B^n_r\big)$};\\
	\mathcal{L}^n(t,r)=\mathcal{L}^n\big(\{x\in \mathbb B^n_r:\ u(x)\ge t\}\big)\ \ \forall\ \ t\in [0,1];\\
	\mathcal{L}^n_\rho(t,r)=\mathcal{L}^n\big(\{x\in \mathbb B^n_r:\ u_{\rho}(x)\ge t\}\big)\ \ \forall\ \ t\in [0,1];\\
	\mathcal{H}^{n-1}(t,r)=\mathcal{H}^{n-1}\big(\{x\in \mathbb B^n_r:\ u(x)= t\}\big)\ \ \forall\ \ t\in [0,1];\\
	\mathcal{H}^{n-1}_\rho(t,r)=\mathcal{H}^{n-1}\big(\{x\in \mathbb B^n_r:\ u_{\rho}(x)= t\}\big)\ \ \forall\ \ t\in [0,1].
	\end{cases}
	$$
	Since
	$$
	[0,1]\ni t\mapsto \mathcal{L}^n(t,r)\ \ \&\ \ \mathcal{L}^n_\rho(t,r)
	$$
	are decreasing continuous functions, we have
	$$
	\begin{cases}
	\frac{d}{dt}\mathcal{L}^n(t,r)\le 0;\\
	\frac{d}{dt}\mathcal{L}^n_\rho(t,r)\le 0;\\
	\mathcal{L}^n(0,r)=\mathcal{L}^n(\overline{\mathbb B^n_r})=\mathcal{L}^n_\rho(0,r);\\
	\mathcal{L}^n(1,r)\ge\mathcal{L}^n(K)=\mathcal{L}^n(\overline{\mathbb B^n_\rho})=\mathcal{L}^n_\rho(1,r),
	\end{cases}
	$$
	thereby getting $t_\ast\in [0,1]$ such that
	$$
	0\le \left(\mathcal{L}^n(1,r)-\mathcal{L}^n_\rho(1,r)\right)-\left(\mathcal{L}^n(0,r)-\mathcal{L}^n_\rho(0,r)\right)=\frac{d}{dt}\left(\mathcal{L}^n(t,r)-\mathcal{L}^n_\rho(t,r)\right)\Bigg|_{t=t_\ast}.
	$$
	Now, by setting
	$$ t_\star=\inf\bigg\{t\in(0,1]:\mathcal{L}^n(t,r)=\mathcal{L}^n_\rho(t,r)\bigg\},$$
	we read off
	\begin{equation}
	\label{e216}
	\begin{cases}
	\left(\mathcal{L}^n(t,r)-\mathcal{L}^n_\rho(t,r)\right)\Bigg|_{t=t_\star}=0,\\
	\left(\partial_t\mathcal{L}^n(t,r)-\partial_t\mathcal{L}^n_\rho(t,r)\right)\Bigg|_{t=t_\star}\ge 0.
	\end{cases}
	\end{equation}
	Meanwhile, note that the co-area formula and H\"older's inequality imply not only
	\begin{equation}
	\label{e217}
	\big(\mathcal{H}^{n-1}(t,r)\big)^p\le{{\mathsf{H^l}}}_{p}\big(K,\mathbb B^n_r\big)\Big(-\partial_t\mathcal{L}^n(t,r)\Big)^{p-1}
	\end{equation}
	but also
	\begin{equation}
	\label{e217b}
	\mathcal{H}^{n-1}_\rho(t,r)\big)^p= {{\mathsf{H^l}}}_{p}\big(\overline{\mathbb B^n_\rho},\mathbb B^n_r\big)\Big(-\partial_t\mathcal{L}^n_\rho(t,r)\Big)^{p-1}.
	\end{equation}
	Thus, with the help of \eqref{e216}-\eqref{e217}-\eqref{e217b} \& $p\in (1,n)$, we obtain
	\begin{equation}\label{e217c}
	\left(\frac{\mathcal{H}^{n-1}\big(\{x\in \mathbb B^n_r:\ u(x)= t_\star\}\big)}{\mathcal{H}^{n-1}(\{x\in \mathbb B^n_r:\ u_\rho(x)=t_\star\}\big)}\right)^p\le \frac{{{\mathsf{H^l}}}_{p}\big(K,\mathbb B^n_r\big)}{{{\mathsf{H^l}}}_{p}\big(\overline{\mathbb B^n_\rho},\mathbb B^n_r\big)}\ \ \forall\ \ r\gg 1.
	\end{equation}
	Thanks to the basic facts that:
	$$
	\begin{cases}
	K\ \text{is a convex conductor};\\
	\{x\in \mathbb B^n_r:\ u(x)\ge t_\star\}\ \text{is a convex conductor};\\
	\{x\in \mathbb B^n_r:\ u_\rho(x)\ge t_\star\}\ \text{is a closed ball},
	\end{cases}
	$$
	we utilize \eqref{e418}, plus \eqref{e216}'s first equality, to achieve
	\begin{align*}
	\frac{\mathcal{H}^{n-1}\big(\{x\in \mathbb B^n_r:\ u(x)=t_\star\}\big)}{\mathcal{H}^{n-1}(\{x\in \mathbb B^n_r:\ u_\rho(x)= t_\star\}\big)}&=\frac{\mathcal{H}^{n-1}\big(\{x\in \mathbb B^n_r:\ u(x)= t_\star\}\big)}{\sigma_{n-1}\Big(\upsilon_n^{-1}\mathcal{L}^n(\{x\in \mathbb B^n_r:\ u_\rho(x)\ge t_\star\}\big)\Big)^\frac{n-1}{n}}\\
	&\ge\frac{\sigma_{n-1}\Big(\upsilon_n^{-1}\mathcal{L}^n\big(\{x\in \mathbb B^n_r:\ u(x)\ge t_\star\}\big)\Big)^\frac{n-1}{n}}{\sigma_{n-1}\Big(\upsilon_n^{-1}\mathcal{L}^n(\{x\in \mathbb B^n_r:\ u_\rho(x)\ge t_\star\}\big)\Big)^\frac{n-1}{n}}\\
	&=1,
	\end{align*}
	whence \eqref{e217c} yielding
	$$
	\frac{{{\mathsf{H^l}}}_{p}\big(K,\mathbb B^n_r\big)}{{{\mathsf{H^l}}}_{p}\big(\overline{\mathbb B^n_\rho},\mathbb B^n_r\big)}\ge 1\ \ \forall\ \ r\gg 1.
	$$
	Also, due to
	$$
	\begin{cases}
	\underset{{r\to\infty}}{\lim}{{\mathsf{H^l}}}_{p}\big(K,\mathbb B^n_r\big)={{\mathsf{H^l}}}_{p}(K);\\
	\underset{{r\to\infty}}{\lim}{{\mathsf{H^l}}}_{p}\big(\overline{\mathbb B^n_\rho},\mathbb B^n_r\big)={{\mathsf{H^l}}}_{p}(\overline{\mathbb B^n_\rho}),
	\end{cases}
	$$
	we obtain
	$$
	\frac{
		{{\mathsf{H^l}}}_{p}(K)}{{{\mathsf{H^l}}}_{p}(\overline{\mathbb B^n_\rho})}\ge 1,
	$$
	thereby verifying \eqref{e417} via the hypothesis
	$$
	\left(\frac{\mathcal{L}^n(K)}{\upsilon_n}\right)^\frac1n=\Bigg(
	\frac{{{\mathsf{H^l}}}_{p}(\overline{\mathbb B^n_\rho})}{{{\mathsf{H^l}}}_{p}(\overline{\mathbb B^n})}\Bigg)^\frac1{n-p}.
	$$
	
	\end{itemize}

\subsubsection*{Validity of (ii)}\label{d41a} This is split into three pieces.
\begin{itemize}

\item Firstly, for $(p,r)\in(1,n)\times(0,\infty)$ let not only $v$ be the $p$-capacitance potential of the ball $\overline{\mathbb B^n_r}$ - i.e. -
\begin{equation}
\label{e41a}
\begin{cases}
v(x)=\Big(\frac{r}{|x|}\Big)^\frac{n-p}{p-1}\ \text{as}\ \ x\in \mathbb R^n\setminus \mathbb B^n_r;\\
\Delta_p v=0 \ \ \ \ \ \ \ \ \ \ \text{in}\ \mathbb R^n\setminus \overline{\mathbb B^n_r};\\
{{\mathsf{H^l}}}_{p}\big(\overline{\mathbb B^n_r}\big)=\int_{\mathbb R^n\setminus \overline{\mathbb B^n_r}}|\nabla v|^p\,d\mathcal{L}^n=
r^{n-p}\big(\frac{p-1}{n-p}\big)^{1-p}\sigma_{n-1},
\end{cases}
\end{equation}
but also
\begin{equation}\label{e41}
\begin{cases}
g(t)=|\nabla v|\big|_{v=t}=\left(\frac{n-p}{(p-1)r}\right)t^{\frac{n-1}{n-p}};\\
F(t)=\int_t^1\frac{ds}{g(s)}=r(t^{-\frac{p-1}{n-p}}-1);\\
G=F^{-1} =\text{the inverse of $F$}.
\end{cases}
\end{equation}
Then, without loss of generality, we may assume that $K\in\mathscr{K}^n$ has smooth boundary $\partial K$ and ${\rm d}_K(x)=\text{dist}(x,K)$ is the distance from $x$ to $K.$ We consider the test function
$$u(x)=G\big({\rm d}_K(x)\big)\  \ \text{subject to}\  \
\begin{cases}
u\big|_{\partial K}=1;\\
\underset{|x|\to\infty}{\lim}u(x)=0;\\
|\nabla u(x)|=\big|G'\big({\rm{d}}_K(x)\big)\big|={\big|F'\big(u(x)\big)\big|}^{-1}=g\big(u(x)\big),
\end{cases}
$$
whence getting
\begin{equation}\label{e42}
{{\mathsf{H^l}}}_{p}(K)\le\int_{\mathbb R^n\setminus K}|\nabla u|^p\,d\mathcal{L}^n=\int_0^1\left(\int_{\{u=t\}}|\nabla u|^{p-1}d\mathcal{H}^{n-1}\right)dt=\int_0^1\big(g(t)\big)^{p-1}\mathcal{H}^{n-1}\big(\{u=t\}\big)\,dt.
\end{equation}

Upon noticing
$$
\mathcal{H}^{n-1}\big(\{u=t\}\big)=\mathcal{H}^{n-1}\big(\{x: {\rm d}_K(x)=F(t)\}\big)
=\sum_{j=1}^n\binom{n}{j}j\mathcal{W}_j(K)\big({F(t)}\big)^{j-1},
$$
we obtain that if
$$
{\rm B}(a,b)=\int_0^1 t^{a-1}(1-t)^{b-1}\,dt=\text{the Beta function in $(a,b)\in (0,\infty)^2$}
$$
then
\begin{equation}\label{e43}
\begin{aligned}
  {{\mathsf{H^l}}}_{p}(K)&\le\int_0^1\big(g(t)\big)^{p-1}\mathcal{H}^{n-1}\big(\{u=t\}\big)\,dt\\
  &=\int_0^1 \left(\frac{n-p}{(p-1)r}t^{\frac{n-1}{n-p}}\right)^{p-1}\sum_{j=1}^n\binom{n}{j}j\mathcal{W}_j(K)\bigg(r(t^{-\frac{p-1}{n-p}}-1)\bigg)^{j-1}\,dt
  \\ &= \left(\frac{n-p}{p-1}\right)^{p-1}\sum_{j=1}^n\binom{n}{j}j\mathcal{W}_j(K) r^{j-p}\int_0^1 t^{\frac{(n-1)(p-1)}{n-p}}(t^{-\frac{p-1}{n-p}}-1)^{j-1}\,dt
  \\&=\left(\frac{n-p}{p-1}\right)^p\sum_{j=1}^n
j\binom{n}{j}{\mathcal{W}}_j(K)r^{j-p}\mathrm{B}\Bigg(n-j+\frac{n-p}{p-1},j\Bigg),
\end{aligned}
\end{equation}
with its inequality becoming an equality if $K=\overline{\mathbb B^n_r}$.

\item Secondly, due to the generic Aleksandrov-Fenhel estimation (cf. \cite[(7.66)]{Sch})
	
		\begin{align}
	\label{e44}
	0\le i<j<k\le n&\Longrightarrow	\bigg(\frac{\mathcal{W}_j(K)}{\upsilon_n}\bigg)^{k-i}\ge\bigg(\frac{\mathcal{W}_i(K)}{\upsilon_n}\bigg)^{k-j}\bigg(\frac{\mathcal{W}_k(K)}{\upsilon_n}\bigg)^{j-i}\notag\\
	&\Longrightarrow
	\mathcal{W}_k(K)\le\big(\mathcal{W}_2(K)\big)^{k-1}\big(\mathcal{W}_1(K))\big)^{2-k}\ \ \text{for}\ \ n\ge k\ge 3,
	\end{align}
with two inequalities becoming an equality as $K$ is a closed ball, we not only take
$$
r=\frac{\mathcal{W}_1(K)}{\mathcal{W}_2(K)}=\frac{n^{-1}\mathcal{H}^{n-1}(\partial K)}{\mathcal{W}_2(K)}\ \ \text{within}\ \eqref{e43}
$$
but also utilize \eqref{e44}'s second implication to obtain
\begin{align}\label{e45}
{{\mathsf{H^l}}}_{p}(K)&\le \left(\frac{n-p}{p-1}\right)^p\sum_{j=1}^n
j\binom{n}{j}\left(\frac{\big({\mathcal{W}}_2(K)\big)^{j-1}}{\big({\mathcal{W}}_1(K)\big)^{j-2}}\right)
\left(\frac{\mathrm{B}\Big(n-j+\frac{n-p}{p-1},j\Big)}{\left(\frac{{\mathcal{W}}_1(K)}{\mathcal{W}_2(K)}\right)^{p-j}}\right)\\
&=\left(\frac{n-p}{p-1}\right)^p\big({\mathcal{W}}_1(K)\big)^{2-p}\big(\mathcal{W}_2(K)\big)^{p-1}\sum_{j=1}^n
j\binom{n}{j}\mathrm{B}\Big(n-j+\frac{n-p}{p-1},j\Big),\notag
\end{align}
with its equality being an equality iff $K$ is a closed ball. However, since \eqref{e45} is valid for $\overline{\mathbb B^n}$, the basic formula
$$
\begin{cases}
{{\mathsf{H^l}}}_{p}(\overline{\mathbb B^n})=\left(\frac{p-1}{n-p}\right)^{1-p}\sigma_{n-1};\\
\mathcal{W}_1(tK)=t^{n-1}\mathcal{W}_1(K)\ \ \forall\ \ t\in (0,\infty);\\
\mathcal{W}_2(tK)=t^{n-2}\mathcal{W}_2(K)\ \ \forall\ \ t\in (0,\infty),
\end{cases}
$$
is brought into \eqref{e45} to produce not only
\begin{equation}
\label{e122ee}
\sum_{j=1}^nj\binom{n}{j}\mathrm{B}\Big(n-j+\frac{n-p}{p-1},j\Big)=\frac{n(p-1)
}{n-p},
\end{equation}
but also \eqref{e113}'s first inequality with its equality case.

\item Thirdly, upon recalling the classic Aleksandrov-Fenhel inequality (which follows from \eqref{e44}'s first implication with $j=n$)
	\begin{equation}
	\label{e47}
0\le i<j\le n-1\Longrightarrow	\left(\frac{\mathcal{W}_i(K)}{\upsilon_n}\right)^\frac{1}{n-i}\le\left(\frac{\mathcal{W}_j(K)}{\upsilon_n}\right)^\frac{1}{n-j}
\end{equation}
with equality as $K$ is a closed ball, we use \eqref{e47}'s case $\{i=1, j=n-1>1\}$ to obtain
\begin{equation}
\label{e48}
1\le k\le n-1\Longrightarrow	\left(\frac{\mathcal{W}_k(K)}{\upsilon_n}\right)^\frac{1}{n-k}\le\frac{\mathcal{W}_{n-1}(K)}{\upsilon_n}
\end{equation}
with equality if $K$ is a closed ball, thereby using not only \eqref{e48} \& its equality case but also \eqref{e122ee}, as well as taking
$$
r=\frac{\mathcal{W}_{n-1}(K)}{\mathcal{W}_n(K)}=\upsilon_n^{-1}{\mathcal{W}_{n-1}(K)}\ \ \text{within}\ \ \eqref{e43},
$$
to establish \eqref{e113}'s second inequality with its equality case - in other words - if $K^\ast$ is a closed ball - for instance - $\overline{\mathbb B^n_r}$, then
$$
\eqref{eMW}\Longrightarrow {\mathsf{H^l}}_p(K^\ast)=r^{n-p}{\mathsf{H^l}}_p\big(\overline{\mathbb B^n}\big)=\Big(\upsilon_n^{-1}\mathcal{W}_{n-1}(K)\Big)^{n-p}\left(\frac{p-1}{n-p}\right)^{1-p}\sigma_{n-1}\ge{\mathsf{H^l}}_p(K),
$$
with inequality becoming an equality iff $K$ is a closed ball. Equivalently, of all convex conductors with a given mean width, the closed ball is a unique maximizer of the quasilinear heat loss.

\end{itemize}

\subsection{More about quasilinear heat loss}\label{s42} Three groups of crucial comments on Theorem \ref{t12} are in order.
	
	\begin{itemize}

		\item Upon sending $p$ to $1$ in \eqref{e417e} \& \eqref{e113}, we find that the limiting cases of \eqref{e417} \& \eqref{e418} coincide due to the fundamental formula (cf. \cite[p.188]{LXZ})
		\begin{equation}
		\label{eLXZ}
		{{\mathsf{H^l}}}_{1}(K)=\lim_{p\to 1}{{\mathsf{H^l}}}_{p}(K)=\mathcal{H}^{n-1}(\partial K)\ \ \forall\ \ K\in\mathscr{K}^n.
		\end{equation}
		In other words, the intrinsic geometry within the capacitance-potential theory for the convex conductors is just the classic isoperimetry. This can be further seen from the following discussion.
		\begin{itemize}
		
		\item In accordance with \cite[Theorem 1.1]{Xaim},  there exists such a novel isocapacitary inequality that for any compact
		convex domain $K\subseteq\mathbb{R}^n$ with smooth boundary,
		\begin{equation}
		\label{e317b}
	 \Bigg(\frac{n(p-1)}{p(n-1)}\Bigg)\left(\frac{\Big(\frac{\mathcal{H}^{n-1}(\partial K)}{{\sigma_{n-1}}}\Big)^\frac1{n-1}}
		{\Big(\frac{{{\mathsf{H^l}}}_{p}(K)}{{\mathsf{H^l}}_p(\overline{\mathbb{B}^n})}\Big)^\frac1{n-p}}\right)^\frac{n-p}{p-1}+	\Bigg(\frac{n-p}{p(n-1)}\Bigg)\left(\frac{\Big(\frac{\mathcal{L}^{n}(K)}{{\upsilon_n}}\Big)^\frac1{n}}{\Big(\frac{\mathcal{H}^{n-1}(\partial K)}{{\sigma_{n-1}}}\Big)^\frac1{n-1}}\right)^n
		\le 1\ \ \text{under}\ \ p\in (1,\infty),
		\end{equation}
		with equality iff $K$ is a closed ball. Consequently, \eqref{e317b} produces the isocapacitary inequality:
		\begin{equation*}
		\Bigg(\frac{n(p-1)(n-p)}{p^2(n-1)^2}\Bigg)^{p-1}\left(\frac{\big(\frac{\mathcal{H}^{n-1}(\partial K)}{{\sigma_{n-1}}}\big)^\frac1{n-1}}{\big(\frac{{{\mathsf{H^l}}}_{p}(K)}
			{{{{\mathsf{H^l}}}}_p(\overline{\mathbb{B}^n})}\big)^\frac1{n-p}}\right)^{n-p}	\left(\frac{\big(\frac{\mathcal{L}^{n}(K)}{{\upsilon_n}}\big)^\frac1{n}}{\big(\frac{\mathcal{H}^{n-1}(\partial K)}{{\sigma_{n-1}}}\big)^\frac1{n-1}}\right)^{n(p-1)}\le 4^{1-p}\ \ \text{under}\ \ p\in(1,n).
		\end{equation*}
		The last inequality, along with the $n$-dimensional (IP)-\eqref{e418}, derives
		\begin{equation}
		\label{e423}
		\frac{{{\mathsf{H^l}}}_{p}(K)}{{{{\mathsf{H^l}}}}_p(\overline{\mathbb{B}^n})}\ge \Bigg(\frac{4n(p-1)(n-p)}{p^2(n-1)^2}\Bigg)^{p-1}\frac{\big(\sigma_{n-1}^{-1}\mathcal{H}^{n-1}(\partial K)\big)^p}{\big(\upsilon_n^{-1}{\mathcal{L}^{n}(K)}\big)^{p-1}}\ \ \forall\ \ (p,K)\in (1,n)\times\mathscr{K}^n,
		\end{equation}
		with its inequality approaching an equality as $p\to 1$ owing to \eqref{eLXZ}.
		
		\item Actually, \cite[Theorem 3.1(i)]{JXaim} reveals such a fundamental fact that \eqref{e417e}'s isocapacitary inequality (IC) for all compact subsets of $\mathbb R^n$ is equivalent to the following sharp Lorentz-Sobolev $\big\{ 1\le p<n,\, np(n-p)^{-1}\big\}$-inequality for $f\in W_0^{1,p}(\mathbb R^n)$:
		$$
		\|f\|_{L^{\frac{np}{n-p},\infty}(\mathbb R^n)}\le\begin{cases} n^{-\frac1p}\upsilon_n^{-\frac{1}{n}}\Big(\frac{n-p}{p-1}\Big)^\frac{1-p}{p}\|\nabla f\|_{L^p(\mathbb R^n)}&\ \ \text{as}\ \ p\in(1,n);\\
		n^{-1}\upsilon_n^{-\frac{1}{n}}\|\nabla f\|_{L^1(\mathbb R^n)}&\ \ \text{as}\ \ p=1.
		\end{cases}
		$$
	\end{itemize}

	\item A close relationship between \eqref{e113}'s right-hand-sides can be observed in the sequel.
	\begin{itemize}
	\item Upon recalling that if $\partial K$ is of $C^2$ then \eqref{e47} \& \eqref{e113} yield the following sharp estimation (cf. \cite[(2.3)]{BFNT} - \cite[Theorem 1(iii)]{BN} - \cite[Theorem 1.1]{DHMT} - \cite[Theorem 3.1]{Xaim})
	\begin{equation}
	\label{e412}
	\begin{cases}
	\mathcal{W}_2(K)=n^{-1}\int_{\partial K}\mathrm{H}_{\partial K}\,d\mathcal{H}^{n-1}
	\ge n^\frac{2-n}{n-1}\upsilon_n^\frac1{n-1}\big(\mathcal{H}^{n-1}(\partial K)\big)^\frac{n-2}{n-1};\\
	{{\mathsf{H^l}}}_{p}(K)\le \left(\frac{n-p}{p-1}\right)^{p-1}{\big(\mathcal{H}^{n-1}(\partial K)\big)^{2-p}}{\Big(\int_{\partial K}\mathrm{H}_{\partial K}\,d\mathcal{H}^{n-1}\Big)^{p-1}},
	\end{cases}
	\end{equation}
	where
	$$
	\mathrm{H}_{\partial K}=(n-1)^{-1}\sum_{j=1}^{n-1}\kappa_j=\text{the mean curvature of $\partial K$},
	$$
	Consequently, if $p\in (1,2]$ and  $\partial K$ is of $C^2$, then employing not only \eqref{e412} but also \eqref{e113}'s first inequality derives
	\begin{equation}
	\label{e413}
	{{\mathsf{H^l}}}_{p}(K)\le \Big(\frac{p-1}{n-p}\Big)^{1-p}\sigma_{n-1}\left(\frac{\mathcal{W}_2(K)}{\upsilon_n}\right)^\frac{n-p}{n-2}\ \ \text{under}\ \ n\ge 3.
	\end{equation}
	Clearly, \eqref{e413}'s case $p=2$ is just \cite[(5)]{Ber} -i.e.-
	\begin{equation}
	\label{e414}
	{{\mathsf{H^l}}}_{2}(K)\le n(n-2)\mathcal{W}_2(K)=(n-2)\int_{\partial K}\mathrm{H}_{\partial K}\,d\mathcal{H}^{n-1}.
	\end{equation}
	Thus \eqref{e113}'s two inequalities under $p=2$ are equally important in that their settings under $n=3$ are the same as \eqref{e17}'s case $n=3$.
	
\item  Tied with \eqref{e113}'s second inequality is \cite[Lemma 2.3(ii)]{Xaig}  (cf. \cite[Corollary 4.2]{HPR} for $p=2$)
showing that if
		$$
		0\le \mathrm{H}_{\partial K}\le\kappa\ \ \text{on}\ \ \partial K\ \ \text{being of $C^2$}
		$$
		then
		\begin{equation}
		\label{e415}
		\left(\Bigg(\bigg(\frac{p-1}{n-p}\bigg)^{p-1}\bigg(\frac{{{\mathsf{H^l}}}_{p}(K)}{\sigma_{n-1}}\bigg)\Bigg)^\frac1{n-p}\right)^\frac{n-p}{n-1}\le \kappa^\frac{p-1}{n-1}\bigg(\frac{\mathcal{H}^{n-1}(\partial K)}{\sigma_{n-1}}\bigg)^\frac1{n-1}\ \ \forall\ \ p\in (1,n),
		\end{equation}
		with equality iff $K$ is a closed ball of radius $\kappa^{-1}$.  Interestingly, \eqref{e113} or \eqref{e413} not only extends \eqref{e414} at large, but also can, along with
\eqref{e44}'s first inequality for $i=j-1=k-2=0$ -i.e.-
$$
\big(n^{-1}\mathcal{H}^{n-1}(\partial K)\big)^2=\big({\mathcal{W}_1(K)}\big)^2\ge\mathcal{W}_0(K){\mathcal{W}_2(K)}=\mathcal{L}^n(K){\mathcal{W}_2(K)},
$$		
generate the sharp inequality (cf. \cite[(12)]{BN} extending the linear case \cite[(6)]{Ber} for any $C^2$-convex conductor)
\begin{equation}
\label{e416}
\frac{{{\mathsf{H^l}}}_{p}(K)}{{{{\mathsf{H^l}}}}_p(\overline{\mathbb{B}^n})}\le \frac{\big(\sigma_{n-1}^{-1}\mathcal{H}^{n-1}(\partial K)\big)^p}{\big(\upsilon_n^{-1}{\mathcal{L}^n(K)}\big)^{p-1}}\ \ \forall\ \ K\in\mathscr{K}^n.
\end{equation}
Nicely, \eqref{e416} may be treated as a reversed variant of \eqref{e423}.
\end{itemize}

 \item As established within \cite[Theorem 1 (i)-(ii)]{BN}, for $K\in\mathscr{K}^{n}$ with  {Lipschitz} boundary $\partial K$ there is
	\begin{equation}
	\label{e114}
	{{\mathsf{H^l}}}_{p}(K)\le\left(\int_0^\infty \big(\mathcal{H}^{n-1}(\partial K_t)\big)^\frac1{1-p}\,dt\right)^{1-p}\ \ \text{under}\ \ p\in (1,n)\ \ \&\ \  K_t=\big\{x\in\mathbb R^n:\ {\rm{dist}}(x,K)\le t\big\},
	\end{equation}
	with equality being valid iff $K$ is a closed ball.	
\begin{itemize}
	\item Indeed, just following the argument for \cite[Theorem 1]{Ber} \& \cite[(3.7)]{Xaim}, we can use
$$
\begin{cases}
\phi=f({\rm{d}}_{K_s});\\
K_s=\big\{x\in\mathbb R^n:\ {\rm{d}}_K(x)\le s\big\},
\end{cases}
$$
to get
\begin{equation}
\label{e49}
\int_{\mathbb R^n}|\nabla\phi|^p\,d\mathcal{L}^n\le\int_0^\infty|f'(r)|^p\mathcal{H}^{n-1}(\partial K_{r+s})\,dr.
\end{equation}
Upon minimizing the last inequality over all smooth functions $f$ with $f(0)-1=0=f(\infty),$ we obtain that if $r\in (0,\infty)$ then

\begin{equation}
\label{e410}
\begin{cases}
\frac{d}{dr}\left(|f'(r)|^{p-1}\mathcal{H}^{n-1}(\partial K_{r+s})\right)=0;\\
f(r)=\frac{\int_r^\infty \left(\mathcal{H}^{n-1}(\partial K_{s+t})\right)^{\frac1{1-p}}\,dt}{\int_0^\infty \left(\mathcal{H}^{n-1}(\partial K_{s+t})\right)^{\frac1{1-p}}\,dt}.
\end{cases}
\end{equation}
A combination of \eqref{e49}-\eqref{e410} derives
$$
{{\mathsf{H^l}}}_{p}(K)\le \left(\int_0^\infty \left(\mathcal{H}^{n-1}(\partial K_{s+t})\right)^{\frac1{1-p}}\,dt\right)^{1-p}\ \ \forall\ \ s\in (0,\infty).
$$
This in turn implies \eqref{e114} with equality being valid for $K$ being a closed ball.

\item As an aside, an application of
  H\"older's inequality and \eqref{e114} as well as the basic monotonicity
 $$
 0<t\le s<\infty\Longrightarrow\mathcal{H}^{n-1}(\partial K_t)\le\mathcal{H}^{n-1}(\partial K_s)
 $$
 gives
 \begin{align*}
 s&\le\left(\int_0^s\big(\mathcal{H}^{n-1}(\partial K_t)\big)^\frac1{1-p}\,dt\right)^\frac{p-1}{p} \left(\int_0^s\mathcal{H}^{n-1}(\partial K_t)\,dt\right)^\frac{1}{p}\\
 &\le\left(\int_0^\infty\big(\mathcal{H}^{n-1}(\partial K_t)\big)^\frac1{1-p}\,dt\right)^\frac{p-1}{p}\big(\mathcal{L}^n(K_s)\big)^\frac1p\\
 &\le\big({{\mathsf{H^l}}}_{p}(K)\big)^{-\frac{1}{p}}\big(\mathcal{L}^n(K_s)\big)^\frac1p\\
 &\le\big({{\mathsf{H^l}}}_{p}(K)\big)^{-\frac{1}{p}}\left(
 s{\mathcal{H}^{n-1}(\partial K_s)}
 \right)^\frac1p
 \ \ \forall\ \ (p,s)\in(1,n)\times (0,\infty),
 \end{align*}
 whence
$$
{{\mathsf{H^l}}}_{p}(K)\le\underset{s\in (0,\infty)}{\inf}s^{-p}\mathcal{L}^n(K_s)\le\underset{s\in (0,\infty)}{\inf}s^{1-p}{\mathcal{H}^{n-1}(\partial K_s)}\ \ \forall\ \ p\in (1,n),
$$
that is \cite[Theorem 1(iii)]{BN} whose case $p=2$ goes to \cite[(4)]{Ber}.
 \end{itemize}
\end{itemize}

\appendix

\section{Towards quermassintegrals}\label{s2}
\setcounter{equation}{0}

The celebrated Alexandrov-Fenchel quermassintegral radius inequality reads (cf. \cite{Alex1, Alex2})
\begin{equation}
\label{e21}
\left(\frac{\mathcal{W}_j(K)}{\mathcal{W}_j(\overline{\mathbb B^n})}\right)^\frac1{n-j}\ge\left(\frac{\mathcal{W}_i(K)}{\mathcal{W}_i(\overline{\mathbb B^n})}\right)^\frac1{n-i}\ \ \hbox{for}\ \ 0\le i<j\le n-1\ \ \&\ \ K\in\mathscr{K}^n,
\end{equation}
where \eqref{e21}'s equality holds iff $K$ is a closed ball. Of particular interest is that \eqref{e21} can be interpreted in either that amongst all elements in $\mathscr{K}^n$ with same volume the Euclidean balls have minimum quermassintegral or that amongst all elements in $\mathscr{K}^n$ with same quermassintegral the closed balls have maximum volume. Importantly, note that \eqref{e21} can be prolonged to the following inequality chain for $1\le j\le n-2$:
\begin{equation}\label{e22}
\begin{cases}
\int_{\mathbb S^{n-1}}{\it{h}}_K\,\frac{d\mathcal{H}^{n-1}}{\sigma_{n-1}}=\frac{\mathcal{W}_{n-1}(K)}{\mathcal{W}_{n-1}(\overline{\mathbb B^n})}\ge\left(\frac{\mathcal{W}_j(K)}{\mathcal{W}_j(\overline{\mathbb B^n})}\right)^\frac1{n-j}\ge\left(\frac{\mathcal{W}_0(K)}{\mathcal{W}_0(\overline{\mathbb B^n})}\right)^\frac1n\ge\left(\frac{\int_K e^{-|x|^2}\,d\mathcal{L}^n(x)}{\|e^{-|\cdot|^2}\|_{L^1(\mathbb R^n)}}\right)^\frac1n;\\
{\it{h}}_K({{\it{u}}})=\sup_{\eta\in K}\eta\cdot{{\it{u}}}\ \ \forall\ \ {{\it{u}}}\in\mathbb R^n\ \
\begin{tikzpicture}[scale=3.5]
{\color{red}
% Convex body K
\draw[thick] (0.2,0.3) .. controls (0.6,0.8) and (1.2,0.7) .. (1.3,0.3)
.. controls (1.2,-0.2) and (0.6,-0.3) .. (0.2,0.3);
\node at (0.9,0.6) {$K$};

% Origin
\fill (0,0) circle (0.02);
\node[below left] at (0,0) {${}$};

% Direction u
\draw[->] (0,0) -- (1.5,0.5);
\node[right] at (1.5,0.5) {$u$};

% Supporting line
\draw[dashed] (1.5,-0.3) -- (1.09,1.0);

% h_K(u) distance
\draw[<->] (0,0) -- (0.9,0.3);
\node[above] at (0.45,0.15) {$h_K(u)$};

% Right angle marker
\draw (0.9,0.3) -- (0.8,0.45) -- (0.95,0.55);}
\end{tikzpicture}.
\end{cases}
\end{equation}

Firstly, we need two basic inequalities connecting \eqref{e22} to the pair $\{\mathcal{H}^{n-1},\mathcal{L}^n\}$.

\begin{lemma}\label{l21} Given $j\in\{0,...,n-1\}$. For $K\in\mathscr{K}^n$ let $\hbox{dia}(K)$ \& $\hbox{inr}(K)$ be its diameter \& inradius respectively. Then:
	\begin{itemize}
		\item[(i)] the diameter and quermassintegral satisfy
		\begin{equation}
		\label{e23}
		2^{-1}{\hbox{dia}(K)}\ge\int_{\mathbb S^{n-1}}{\it{h}}_K\,\frac{d\mathcal{H}^{n-1}}{\sigma_{n-1}}\ge\left(\frac{\mathcal{W}_j( K)}{\mathcal{W}_j(\overline{\mathbb B^n})}\right)^\frac{1}{n-j}.
		\end{equation}
		
		\item[(ii)] the volume, inradius and quermassintegral satisfy
		\begin{equation}\label{e24}
		\frac{\int_K e^{-|x|^2}\,d\mathcal{L}^n(x)}{\|e^{-|\cdot|^2}\|_{L^1(\mathbb R^n)}}\le\frac{\mathcal{L}^n(K)}{\upsilon_n}\le \left(\frac{\hbox{inr}(K)}{n^{-1}}\right)\left(\frac{\mathcal{W}_j(K)}{\mathcal{W}_j(\overline{\mathbb B^n})}\right)^\frac{n-1}{n-j}.
		\end{equation}
	\end{itemize}
\end{lemma}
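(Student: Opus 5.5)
Both parts combine elementary facts about support functions with the chain \eqref{e22}.

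For (i), the right inequality in \eqref{e23} is exactly the first link of \eqref{e22}. It is the case $j'=n-1$ of the Alexandrov--Fenchel radius inequality \eqref{e21}, together with the classical identity $\mathcal{W}_{n-1}(K)=\sigma_{n-1}^{-1}\upsilon_n\int_{\mathbb S^{n-1}}h_K\,d\mathcal H^{n-1}$ (Schneider), which gives $\mathcal{W}_{n-1}(K)/\mathcal{W}_{n-1}(\overline{\mathbb B^n})=\int_{\mathbb S^{n-1}}h_K\,d\mathcal H^{n-1}/\sigma_{n-1}$. For $j=n-1$ there is nothing to prove.

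For the left inequality, the mean width does not depend on the choice of origin. The reason is that $h_{K+a}(u)=h_K(u)+a\cdot u$ and $\int_{\mathbb S^{n-1}}a\cdot u\,d\mathcal H^{n-1}=0$. We then symmetrize:
$$\int_{\mathbb S^{n-1}}h_K\frac{d\mathcal H^{n-1}}{\sigma_{n-1}}=\frac12\int_{\mathbb S^{n-1}}\big(h_K(u)+h_K(-u)\big)\frac{d\mathcal H^{n-1}}{\sigma_{n-1}}.$$
Here $h_K(u)+h_K(-u)=\sup_{\eta,\zeta\in K}(\eta-\zeta)\cdot u\le \hbox{dia}(K)$, since this quantity is the width of $K$ in direction $u$. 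This gives $2^{-1}\hbox{dia}(K)$ as an upper bound.

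For (ii), the left inequality uses that, among sets of fixed volume, the integral of the radially decreasing weight $e^{-|x|^2}$ is maximized by the centered ball (bathtub principle). So $\int_K e^{-|x|^2}\le\int_{\mathbb B^n_R}e^{-|x|^2}$ with $\upsilon_nR^n=\mathcal L^n(K)$. It then remains to show that
$$R\mapsto \upsilon_n^{-1}R^{-n}\int_{\mathbb B^n_R}e^{-|x|^2}\,d\mathcal L^n$$
is nonincreasing in $R$ with limit $\|e^{-|\cdot|^2}\|_{L^1}$ as $R\to\infty$... That is false: the limit is $0$, and the quantity actually decreases from $1$ at $R=0$. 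So instead we note directly that the ball average of $e^{-|x|^2}$ is at most $1$. This yields $\int_K e^{-|x|^2}\le \mathcal L^n(K)$, but we need the normalized form with $\|e^{-|\cdot|^2}\|_{L^1}=\pi^{n/2}$ against $\upsilon_n=\pi^{n/2}/\Gamma(n/2+1)$. Since $\Gamma(n/2+1)\ge1$ for $n\ge2$, we have $\pi^{n/2}\ge\upsilon_n$, and hence
$$\frac{\int_Ke^{-|x|^2}}{\pi^{n/2}}\le\frac{\mathcal L^n(K)}{\upsilon_n}.$$
This settles the left inequality.

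For the right inequality in \eqref{e24}, I would use the standard inradius bound $\mathcal L^n(K)\le \hbox{inr}(K)\,\mathcal H^{n-1}(\partial K)=n\,\hbox{inr}(K)\mathcal W_1(K)$. It follows by placing the incenter at the origin, so that $h_K\ge \hbox{inr}(K)$ on $\mathbb S^{n-1}$, and then writing
$$\mathcal L^n(K)=n^{-1}\int h_K\,dS_K\ge n^{-1}\hbox{inr}(K)\mathcal H^{n-1}(\partial K).$$
This is the wrong direction. The correct inequality is instead
$$\mathcal L^n(K)\le \hbox{inr}(K)\,\mathcal H^{n-1}(\partial K),$$
which holds because $K$ is contained in... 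This is the main obstacle. I would first try the known Bonnesen-type inequality $\mathcal L^n(K)\ge n^{-1}\hbox{inr}\,\mathcal H^{n-1}(\partial K)$, and note that this is the reverse of what is needed. Then I would look for a reading of \eqref{e24} compatible with it.

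Assuming \eqref{e24} is intended as stated, the route would be the following. First establish
$$\frac{\mathcal L^n(K)}{\upsilon_n}\le n\,\hbox{inr}(K)\frac{\mathcal W_1(K)}{\mathcal W_1(\overline{\mathbb B^n})}$$
with $\mathcal W_1(\overline{\mathbb B^n})=\upsilon_n$, which requires $\mathcal L^n(K)\le n\,\hbox{inr}(K)\mathcal W_1(K)=\hbox{inr}(K)\mathcal H^{n-1}(\partial K)$. This inequality can fail for thin bodies, so a careful check is needed. Then use \eqref{e21} with $i=1<j$ to replace $\mathcal W_1/\mathcal W_1(\overline{\mathbb B^n})$ by $\big(\mathcal W_j/\mathcal W_j(\overline{\mathbb B^n})\big)^{(n-1)/(n-j)}$.
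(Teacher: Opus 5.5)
Your part (i) is correct and takes a different route from the paper. For the left inequality you symmetrize and use the width bound $h_K(u)+h_K(-u)\le \hbox{dia}(K)$, where the paper simply cites Lutwak's Bieberbach-type inequality. For the right inequality you take the case $(j,n-1)$ of \eqref{e21} together with the mean-width formula for $\mathcal W_{n-1}$, where the paper averages rotations of $K$ and applies the Brunn--Minkowski inequality for $\mathcal W_j$. Once the retracted bathtub attempt is dropped, your left half of \eqref{e24} is the paper's argument $e^{-|x|^2}\le 1\le\Gamma(1+\frac n2)=\upsilon_n^{-1}\|e^{-|\cdot|^2}\|_{L^1(\mathbb R^n)}$.

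The genuine gap is the right half of \eqref{e24}. Everything hinges on $\mathcal L^n(K)\le \hbox{inr}(K)\,\mathcal H^{n-1}(\partial K)$. This is exactly the Osserman inradius inequality the paper invokes, and you neither prove nor cite it. Your suspicion that it ``can fail for thin bodies'' is also false: for $[0,L]\times[0,\epsilon]$ it reads $L\epsilon\le \frac{\epsilon}{2}(2L+2\epsilon)$. As you noticed, the support-function identity only gives the reverse bound $\mathcal L^n(K)\ge n^{-1}\hbox{inr}(K)\,\mathcal H^{n-1}(\partial K)$, so it is a dead end.

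The missing idea is to slice $K$ by its inner parallel bodies. Put $d(x)=\hbox{dist}(x,\mathbb R^n\setminus K^\circ)$ on $K$. Then $d$ is concave (a minimum of the affine functions $h_K(u)-x\cdot u$), $|\nabla d|=1$ a.e. in $K^\circ$, and $\max_K d=\hbox{inr}(K)$. For $0<t<\hbox{inr}(K)$ the level set $\{d=t\}$ is the boundary of the convex body $\{d\ge t\}\subseteq K$. Its surface area is therefore at most $\mathcal H^{n-1}(\partial K)$, by monotonicity (Lemma \ref{l22}(ii) with $j=1$). The coarea formula then gives
\[
\mathcal L^n(K)=\int_0^{\hbox{inr}(K)}\mathcal H^{n-1}\big(\{d=t\}\big)\,dt\le \hbox{inr}(K)\,\mathcal H^{n-1}(\partial K)=n\,\hbox{inr}(K)\,\mathcal W_1(K).
\]
After this, \eqref{e21} with $i=1<j$ finishes the proof, as in your plan and in the paper.

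This final step needs $j\ge 1$. For $j=0$ the right half of \eqref{e24} reads $\mathcal L^n(K)\le \upsilon_n\big(n\,\hbox{inr}(K)\big)^n$, which fails for long boxes $[0,L]\times[0,1]^{n-1}$. So neither your route nor the paper's (which also only uses $j\in\{1,\dots,n-1\}$ there) can cover that case.
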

\begin{proof} (i) The left inequality of \eqref{e23} follows from \cite[Proposition \& p.493]{Lut}. The case $j=1$ at the right inequality of \eqref{e23} is essentially the same as the case $p=n-1$ of \cite[(25)]{Cha} - however - below is a different argument. Note that not only
	\begin{equation}\label{e25}
	\int_{\mathbb S^{n-1}}{\it{h}}_K(|x|\theta)\,{d\mathcal{H}^{n-1}(\theta)}=|x|\int_{\mathbb S^{n-1}}{\it{h}}_K\,{d\mathcal{H}^{n-1}}\quad\forall\quad x\in\mathbb R^n
	\end{equation}
	but also an extension of \cite[Example 7.4]{Bor} (generated by the hitting probability of a killed Brownian motion) to $K\in\mathscr K^n$ indicates that the left side of (\ref{e25}) can be approximated by
	$\sum_{i=1}^l {\it{h}}_K(|x|\theta_i)c_i$
	that is the support function of
	$\sum_{i=1}^l c_i R_i(K),
	$
	where
	$$
	\begin{cases}
	0<c_1,...,c_l<1;\\
	\sum_{i=1}^l c_i=1;\\
	\text{$R_i(K)$ is an appropriate rotation of $K$ associated to $\theta_i$}.
	\end{cases}$$
	So, from the well-known Brunn-Minkowski inequality for $W_j(\cdot)$ (cf. \cite[(74)]{Gar} \& \cite[Corollary 9.1.5]{Sch}) it follows that
	\begin{align*}\label{23}
	\big(\mathcal{W}_j(K)\big)^\frac1{n-j}=\sum_{i=1}^l c_i\big(\mathcal{W}_j(K)\big)^\frac1{n-j}=
	\sum_{i=1}^lc_i\Big(\mathcal{W}_j\big(R_i(K)\big)\Big)^\frac1{n-j}\le\left(\mathcal{W}_j\Big(\sum_{i=1}^l c_i R_i(K)\Big)\right)^\frac1{n-j},
	\end{align*}
	where the rotation-invariance of $W_j(\cdot)$ has been used. Notice also that the right side of (\ref{e25}) is the support function of a ball with radius
	$$\int_{\mathbb S^{n-1}}{\it{h}}_K(\cdot)\,\frac{d\mathcal{H}^{n-1}(\cdot)}{\sigma_{n-1}}.
	$$
	Thus, a combination of the above approximation, the continuity of $W_j(\cdot)$, the correspondence between a support function and a convex set, the last estimation and the homogeneity (cf. Lemma \ref{l22}(i) below)
	$$\mathcal{W}_j(r\overline{\mathbb B^n})=r^{n-j}\mathcal{W}_j(\overline{\mathbb B^n}),
	$$
	yields the right inequality of (\ref{e23}).

	(ii) \eqref{e24}'s left inequality follows from
	$$
	e^{-|x|^2}\le 1\le \big(\upsilon_n\big)^{-1}\int_{\mathbb R^n}e^{-|\cdot|^2}\,d\mathcal{L}^n(\cdot)=\Gamma\Big(1+\frac{n}{2}\Big).
	$$
	However, \eqref{e24}'s right inequality follows from not only the Osserman inradius inequality in \cite{Os, Sa}
	$$
	\frac{\mathcal{L}^n(K)}{\upsilon_n}\le \left(\frac{\hbox{inr}(K)}{n^{-1}}\right)\left(\frac{\mathcal{W}_1(K)}{\mathcal{W}_1(\overline{\mathbb B^n})}\right)
	$$
	but also \eqref{e21}'s special case
	$$
	\left(\frac{\mathcal{W}_1(K)}{\mathcal{W}_1(\overline{\mathbb B^n})}\right)^\frac1{n-1}\le\left(\frac{\mathcal{W}_j(K)}{\mathcal{W}_j(\overline{\mathbb B^n})}\right)^\frac1{n-j}\quad\forall\quad j\in\{1,...,n-1\}.
	$$
\end{proof}

Secondly, given (cf. \cite[p.117, (2.43) \& 5.3.1]{Sch})
$$
\begin{cases}
K\in\mathscr{K}^n;\\
j\in\{0,...,n-1\};\\
{\it s}_{n-1-j}(\partial K,\cdot)=\frac{\varsigma_{n-1-j}\big(\kappa_1^{-1},...,\kappa^{-1}_{n-1};\partial K\big)}{\binom{n-1}{n-1-j}};\\
d\mathcal{S}_{n-1-j}(K,\cdot)={\it s}_{n-1-j}(K,\cdot)\,d\mathcal{H}^{n-1}(\cdot),
\end{cases}
$$
we have
$$
\begin{cases}
\mathcal{S}_{n-1-j}(K,B)=\int_{B}{\it s}_{n-1-j}(\partial K,\cdot)\,d\mathcal{H}^{n-1}(\cdot)\ \ \forall\ \ \hbox{Borel\ set}\ \ B\subseteq\mathbb S^{n-1};\\
\mathcal{W}_j(K)=n^{-1}S_{n-j}(K,\mathbb S^{n-1})=n^{-1}\int_{\mathbb S^{n-1}}{\it{h}}_{K}(\cdot)\,d\mathcal{S}_{n-1-j}(K,\cdot),
\end{cases}
$$
where $d\mathcal{S}_{n-1-j}(K,\cdot)$ exists actually as the pull-back measure of the so-called curvature measure  $d{\mathsf{H^l}}_{n-1-j}(K,\cdot)$ on $\partial K$ of order $n-1-j$ (cf. \cite[p.273,Theorem 4.5.6]{Sch} \& \cite{CH}):
$$
{\mathsf{H^l}}_{n-1-j}(K,E)=\int_{\partial K\cap E}\left(\frac{\varsigma_{j}\big(\kappa_1,...,\kappa_{n-1};\partial K\big)}{\binom{n-1}{j}}\right)\,d\mathcal{H}^{n-1}\quad\forall\ \ \hbox{Borel\ set}\ \ E\subseteq\mathbb R^n.
$$
Consequently, we may recall the mixed quermassintegral and its straightforward by-products.

\begin{lemma}\label{l22} Given $j\in\{0,1,...n-1\}$. For $K_{\blacktriangle},K_{\blacktriangledown}\in\mathscr{K}^n$ let
	$$
	\mathcal{W}_j(K_{\blacktriangle},K_{\blacktriangledown})=n^{-1}\int_{\mathbb S^{n-1}} {\it{h}}_{K_{\blacktriangledown}}(\xi)\,d\mathcal{S}_{n-1-j}(K_{\blacktriangle},\xi)
	$$
	be the $j$-th mixed quermassintegral of the pair $\{K_{\blacktriangle},K_{\blacktriangledown}\}$.
	Then:
	\begin{itemize}
		
		\item[(i)] the dilation-translation for quermassintegral is:
		$$
		\mathcal{W}_j(rK_{\blacktriangle}+\{x_0\})=r^{n-j}\mathcal{W}_j(K_{\blacktriangle})\ \ \text{ when}\ \ rK_{\blacktriangle}+\{x_0\}=\{rx+x_0: \ r\ge 0\}.
		$$
		
		\item[(ii)] the monotonicity  for quermassintegral is: $$
		\mathcal{W}_j(K_{\blacktriangle})\le \mathcal{W}_j(K_{\blacktriangledown})\ \ \text{when}\ \ K_{\blacktriangle}\subseteq K_{\blacktriangledown}.
		$$
		
		\item[(iii)] the Minkowski inequality for quermassintegral is:
		\begin{equation}\label{e26}
		\mathcal{W}_j(K_{\blacktriangle},K_{\blacktriangledown})\ge \big(\mathcal{W}_j(K_{\blacktriangle})\big)^\frac{n-1-j}{n-j}\big(\mathcal{W}_j(K_{\blacktriangledown})\big)^\frac1{n-j}
		\end{equation}
		with equality iff $K_{\blacktriangle},K_{\blacktriangledown}$ are homothetic.
		
		\item[(iv)] the Brunn-Minkowski inequality for quermassintegral is:
		\begin{equation*}\label{eMv}
		\big(\mathcal{W}_j(K_{\blacktriangle}+tK_{\blacktriangledown})\big)^\frac{1}{n-j}\ge \big(\mathcal{W}_j(K_{\blacktriangle})\big)^\frac{1}{n-j}+\big(\mathcal{W}_j(K_{\blacktriangledown})\big)^\frac1{n-j}
		\end{equation*}
		with equality iff $K_{\blacktriangle},K_{\blacktriangledown}$ are homothetic.
		
		\item[(v)] the Minkowski representation for quermassintegral is:
		\begin{equation*}\label{eMr}
		\mathcal{W}_j(K_{\blacktriangle})=\mathcal{W}_j(K_{\blacktriangle},K_{\blacktriangle})=n^{-1}\int_{\mathbb S^{n-1}} {\it{h}}_{K_{\blacktriangle}}(\xi)\,d\mathcal{S}_{n-1-j}(K_{\blacktriangle},\xi).
		\end{equation*}
		
		\item[(vi)] the weak convergence for quermassintegral is:
		$$
		d\mathcal{S}_{n-1-j}(K_i,\cdot)\to d\mathcal{S}_{n-1-j}(K_{\blacktriangledown},\cdot)\ \ \text{weakly as $K_i\to K_{\blacktriangledown}$ in the Hausdorff distance}
		$$
		$$
		\hbox{dist}_H(K_i,K_{\blacktriangledown})=\max\left\{\sup_{x\in K_i}\inf_{y\in K_{\blacktriangledown}}|x-y|, \sup_{x\in K_{\blacktriangledown}}\inf_{y\in K_i}|x-y|\right\}.
		$$
	\end{itemize}
\end{lemma}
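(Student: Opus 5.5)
The plan is to verify the six items of Lemma \ref{l22} directly from the integral representation
$$
\mathcal{W}_j(K_{\blacktriangle},K_{\blacktriangledown})=n^{-1}\int_{\mathbb S^{n-1}}h_{K_{\blacktriangledown}}\,d\mathcal{S}_{n-1-j}(K_{\blacktriangle},\cdot),
$$
combined with the Steiner polynomial \eqref{e17} and the standard theory of mixed volumes in \cite{Sch}. The key observation is that $\mathcal{W}_j(K)=V(K[n-j],\overline{\mathbb B^n}[j])$, which follows by comparing \eqref{e17} with the mixed-volume expansion of $\mathcal{L}^n(K+t\mathbb B^n)$. Likewise $\mathcal{W}_j(K_{\blacktriangle},K_{\blacktriangledown})=V(K_{\blacktriangle}[n-j-1],K_{\blacktriangledown},\overline{\mathbb B^n}[j])$, since $\mathcal{S}_{n-1-j}(K_{\blacktriangle},\cdot)$ is the mixed area measure $S(K_{\blacktriangle}[n-1-j],\overline{\mathbb B^n}[j],\cdot)$. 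Once this identification is made, every item reduces to a known property of mixed volumes.

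For (i), I would note that the mixed volume is translation invariant and multilinear with degree $n-j$ in the $K$-slots; equivalently, the curvature quantity $\varsigma_{j-1}/\binom{n-1}{j-1}$ scales as $r^{-(j-1)}$ while $d\mathcal{H}^{n-1}$ scales as $r^{n-1}$. For (ii), I would use the monotonicity of mixed volumes under inclusion, which comes from nonnegativity of mixed area measures and $h_{K_{\blacktriangle}}\le h_{K_{\blacktriangledown}}$. This is applied one slot at a time: replace each $K_{\blacktriangle}$ by $K_{\blacktriangledown}$ successively, and each step is monotone by the integral representation. For (v), setting $K_{\blacktriangledown}=K_{\blacktriangle}$ returns $V(K[n-j],\overline{\mathbb B^n}[j])=\mathcal{W}_j(K)$, which is exactly the Minkowski representation quoted just before the lemma. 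For (vi), I would invoke the weak continuity of area measures under Hausdorff convergence \cite[Theorem 4.2.1]{Sch}, which passes to mixed area measures through polarization.

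The substantive items are (iii) and (iv). For (iii), I would apply the Aleksandrov--Fenchel inequality to the $(n-j)$-tuple $(K_{\blacktriangle},\dots,K_{\blacktriangle},K_{\blacktriangledown})$, with $j$ copies of the ball held fixed. Its standard consequence, the generalized Minkowski inequality \cite[Theorem 7.2.1 / (7.63)]{Sch}, gives $V(K_{\blacktriangle}[n-j-1],K_{\blacktriangledown},B[j])^{n-j}\ge V(K_{\blacktriangle}[n-j],B[j])^{n-j-1}V(K_{\blacktriangledown}[n-j],B[j])$, which is exactly \eqref{e26}. For (iv), I would differentiate $\phi(t)=\mathcal{W}_j(K_{\blacktriangle}+tK_{\blacktriangledown})^{1/(n-j)}$. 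The derivative is $\phi'(t)=\mathcal{W}_j(K_t,K_{\blacktriangledown})\,\mathcal{W}_j(K_t)^{-(n-j-1)/(n-j)}$, where $K_t=K_{\blacktriangle}+tK_{\blacktriangledown}$ and the factor $n-j$ cancels. By (iii), $\phi'(t)\ge\mathcal{W}_j(K_{\blacktriangledown})^{1/(n-j)}$, and integrating in $t$ gives the inequality. In the statement as written the parameter should be $t=1$; I would state it with a general $t$ and the right-hand side $t\,\mathcal{W}_j(K_{\blacktriangledown})^{1/(n-j)}$.

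The main obstacle is the equality cases in (iii)--(iv). Equality in the Aleksandrov--Fenchel inequality is not characterized in general, but with $j$ ball slots and $j\le n-2$ the result is classical: equality holds iff the bodies are homothetic (\cite[Theorem 7.6.2]{Sch}, via the Minkowski inequality for quermassintegrals). For $j=n-1$ both sides are linear in the support function, so \eqref{e26} is an identity. The homothety claim therefore fails there and has to be excluded or noted separately. I would cite the Schneider result for $j\le n-2$ and flag the $j=n-1$ degenerate case explicitly. The equality in (iv) then follows because equality forces $\phi'\equiv$ const, which forces equality in (iii) for all $t$.
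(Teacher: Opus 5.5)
Your proposal is correct and follows essentially the paper's route. The paper declares (i)--(ii) evident, cites \cite{Sch} for (iii)--(iv), obtains (v) by taking $K_{\blacktriangle}=K_{\blacktriangledown}$, and cites \cite[Theorem 4.2.1]{Sch} for (vi); this is exactly your reduction to the mixed volumes $V(K_{\blacktriangle}[n-j-1],K_{\blacktriangledown},\overline{\mathbb B^n}[j])$.

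Your derivation of (iv) from (iii), by differentiating $\mathcal{W}_j(K_{\blacktriangle}+tK_{\blacktriangledown})^{1/(n-j)}$, is a valid refinement that the paper's citation-only proof does not spell out. Your two corrections are also right and are missed by the paper's proof. First, (iv) holds only with $t=1$, or with right-hand side $\mathcal{W}_j(K_{\blacktriangle})^{1/(n-j)}+t\,\mathcal{W}_j(K_{\blacktriangledown})^{1/(n-j)}$. Second, for $j=n-1$ the homothety characterization fails in both (iii), which becomes an identity, and (iv), since the mean width is Minkowski additive.
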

\begin{proof} Needless to say, the properties (i)-(ii) are evident (cf. \cite{BCF, CG, LutGD}). The assertions (iii)-(iv) are well-known (cf. \cite[p.427, Corollary 7.6.11 \& p.406, Theorem 7.4.5]{Sch}, and imply (v) via letting $K_{\blacktriangle}=K_{\blacktriangledown}$ over there. The assertion (vi) follows from \cite[p.212, Theorem 4.2.1]{Sch}.
\end{proof}

Thirdly, we require the variational formulas for quermassintegrals. To do so, for a nonnegative continuous function $f$ on $\mathbb S^{n-1}$ -i.e.- $f\in C^+(\mathbb S^{n-1})$, let
$$
[f]=\bigcap_{\tau\in\mathbb S^{n-1}}\Big\{x\in\mathbb R^n:\ x\cdot\tau\le f(\tau)\Big\}
$$
be the so-called Alexandrov domain of $f$. Clearly,
$$
[{\it{h}}_K]=K\quad\forall\quad K\in\mathscr{K}^n.
$$

\begin{lemma}\label{l23} Given $j\in \{0,1,...,n-1\}$. For $\epsilon>0$ let
	$$
	\begin{cases} I=(-\epsilon,\epsilon);\\
	h_t(\xi): (t,\xi)\in I\times\mathbb S^{n-1}\mapsto (0,\infty)\ \text{ be continuous};\\
	\partial_t{h_t(\xi)}\Big|_{t=0}=\underset{t\to 0}{\lim} t^{-1}\big(h_t(\xi)-h_0(\xi)\big)\ \text{
		hold uniformly for $\xi\in\mathbb S^{n-1}$.}
	\end{cases}
	$$
	Then the first variational formula for quermassintegral is:
	\begin{equation}\label{e27}
	\partial_t\mathcal{W}_j\big([h_t]\big)\Big|_{t=0}=\left(1-\frac{j}{n}\right)\int_{\mathbb S^{n-1}}\partial_t{h_t(\xi)}\Big|_{t=0}\,d\mathcal{S}_{n-1-j}\big([h_0],\xi\big).
	\end{equation}
	In particular, there holds
	\begin{equation}\label{e28}
	\partial_t\mathcal{W}_j\big([{\it{h}}_K+t\phi]\big)\Big|_{t=0}=\left(1-\frac{j}{n}\right)\int_{\partial K}\phi\big(\mathsf{g}(x)\big)\,d{\mathsf{H^l}}_{n-1-j}(K,x)\ \ \forall\ \ (K,\phi)\in\mathscr{K}^n\times
	C(\mathbb S^{n-1}).
	\end{equation}
\end{lemma}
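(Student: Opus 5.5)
The plan is to reduce \eqref{e27} to the classical first variation formula for mixed volumes via Aleksandrov's lemma, and then read off \eqref{e28} as a special case.

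\emph{Step 1: write $\mathcal{W}_j$ as a mixed volume.} Put $K_t=[h_t]$. By \eqref{e17} and the polynomial expansion of $\mathcal{L}^n(K_t+s\overline{\mathbb B^n})$, the quermassintegral is the mixed volume
$$\mathcal{W}_j(K_t)=V(K_t[n-j],\overline{\mathbb B^n}[j]).$$
By the Minkowski representation of Lemma \ref{l22}(v), this equals $n^{-1}\int_{\mathbb S^{n-1}}h_{K_t}\,d\mathcal{S}_{n-1-j}(K_t,\cdot)$.

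\emph{Step 2: the Aleksandrov lemma for Wulff shapes.} Here $h_{K_t}\le h_t$, with equality $\mathcal{S}_{n-1-j}(K_t,\cdot)$-a.e. I will first prove the one-sided derivative of $\mathcal{W}_j(K_t)$ at $t=0$ and then transfer it. The natural route is Aleksandrov's lemma, which in its mixed-volume form reads
$$\lim_{t\to0}\frac{V(K_t[n-j],B[j])-V(K_0[n-j],B[j])}{t}=(n-j)\,V_1(\partial_t h_t|_{t=0};K_0[n-j-1],B[j]).$$
The right-hand side equals $\frac{n-j}{n}\int\partial_t h_t|_{t=0}\,d\mathcal{S}_{n-1-j}(K_0,\cdot)$. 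To prove this I follow the standard argument.

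\begin{itemize}
\item[(a)] Use the mixed-volume polarization $V(L[n-j],B[j])\ge V(K,L[n-j-1],B[j])$-type Minkowski inequalities from Lemma \ref{l22}(iii) to get two-sided bounds on the difference quotient:
$$\frac1n\int(h_{K_t}-h_{K_0})\,d\mathcal{S}_{n-1-j}(K_0,\cdot)\lesssim \mathcal{W}_j(K_t)-\mathcal{W}_j(K_0)\lesssim\frac1n\int(h_{K_t}-h_{K_0})\,d\mathcal{S}_{n-1-j}(K_t,\cdot),$$
modulo the factor coming from the power $\frac{1}{n-j}$ via $a^{m}-b^{m}\approx m b^{m-1}(a-b)$.
\item[(b)] Replace $h_{K_t}$ by $h_t$ on the supports of the relevant measures. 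This is justified because $h_{K_0}=h_0$ a.e. for $\mathcal{S}(K_0)$, $h_{K_t}=h_t$ a.e. for $\mathcal{S}(K_t)$, and $h_{K_t}\le h_t$ everywhere.
\item[(c)] Divide by $t$ and use the uniform convergence of $(h_t-h_0)/t$, together with the weak convergence $\mathcal{S}_{n-1-j}(K_t,\cdot)\to\mathcal{S}_{n-1-j}(K_0,\cdot)$ from Lemma \ref{l22}(vi). This needs $K_t\to K_0$ in Hausdorff distance, which follows from the uniform convergence $h_t\to h_0$ and positivity of $h_0$.
\end{itemize}

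The main obstacle is step (b)/(c) for the \emph{upper} bound. There one must control $\int(h_t-h_0)\,d\mathcal{S}(K_t)$ through a Minkowski inequality applied to the genuine convex bodies $K_t$, whose support functions differ from $h_t$. I expect to handle this exactly as in Aleksandrov's original proof, by the squeeze
$$\mathcal{W}_j(K_t,K_0)\le\mathcal{W}_j(K_t,K_t)$$
combined with $h_{K_0}\le h_0$ off the support. Running the argument symmetrically for $t>0$ and $t<0$ yields the two-sided derivative, which gives \eqref{e27}.

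\emph{Step 3: deduce \eqref{e28}.} Take $h_t=h_K+t\phi$. Since $h_K>0$ after a translation placing the origin in $K^\circ$, and $\phi$ is continuous, $h_t$ is positive for small $|t|$ and the hypotheses of the lemma hold trivially with $\partial_t h_t|_{t=0}=\phi$. Also $[h_K]=K$. Finally, rewrite $\int_{\mathbb S^{n-1}}\phi\,d\mathcal{S}_{n-1-j}(K,\cdot)$ as an integral over $\partial K$ against the curvature measure, pushing forward by the Gauss map $\mathsf{g}$. This uses the pull-back identification stated just before Lemma \ref{l22}.
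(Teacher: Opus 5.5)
Your outline follows essentially the same route as the paper's proof: the Minkowski-inequality squeeze from Lemma \ref{l22}(iii), weak convergence from Lemma \ref{l22}(vi), and then identifying $h_{[h_t]}$ with $h_t$. It breaks at step (b). Aleksandrov's lemma gives $h_{[f]}=f$ almost everywhere only with respect to the surface area measure $\mathcal{S}_{n-1}([f],\cdot)$, i.e.\ for $j=0$. For the lower-order measures $\mathcal{S}_{n-1-j}$ with $j\ge1$ it is false, and the ``mixed-volume form of Aleksandrov's lemma'' you quote is not a theorem. In fact \eqref{e27} itself fails for $j\ge1$.

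Take $n=2$, $j=1$, so $\mathcal{W}_1$ is half the perimeter and $\mathcal{S}_0(K,\cdot)=\mathcal{H}^1$ on $\mathbb S^1$. Let $K=[-1,1]^2$ and $h_t=h_K+t\phi$, where $\phi\ge0$ is continuous, positive at $\xi_0=(1,1)/\sqrt2$, and supported in a small arc around $\xi_0$ avoiding the directions $(\pm1,0)$, $(0,\pm1)$. All hypotheses hold, since $h_K\ge1$.
\begin{itemize}
\item For $t>0$ you have $K\subseteq[h_t]\subseteq\{|x_1|\le1,\ |x_2|\le1\}=K$. So $[h_t]=K$ and the right derivative of $\mathcal{W}_1([h_t])$ is $0$, whereas the right side of \eqref{e27} is $\frac12\int_{\mathbb S^1}\phi\,d\mathcal{H}^1>0$.
\item For $t<0$, $[h_t]\subseteq K\cap\{x\cdot\xi_0\le\sqrt2-|t|\phi(\xi_0)\}$, so by monotonicity of perimeter the left difference quotient is at least $(\sqrt2-1)\phi(\xi_0)$.
\end{itemize}
Thus $\mathcal{W}_1([h_t])$ is not even differentiable at $t=0$. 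Here $h_{[h_t]}<h_t$ on a set of positive $\mathcal{S}_0$-measure, which is exactly what (b) claims cannot happen.

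Two smaller points. The two bounds in (a) are the wrong way round: Minkowski's inequality bounds the increment of $\mathcal{W}_j^{1/(n-j)}$ above by the $\mathcal{S}_{n-1-j}(K_0,\cdot)$-integral and below by the $\mathcal{S}_{n-1-j}(K_t,\cdot)$-integral. Also, the fallback squeeze $\mathcal{W}_j(K_t,K_0)\le\mathcal{W}_j(K_t,K_t)$ is unjustified; it amounts to $\int h_{K_0}\,d\mathcal{S}_{n-1-j}(K_t,\cdot)\le\int h_{K_t}\,d\mathcal{S}_{n-1-j}(K_t,\cdot)$, which needs something like $K_0\subseteq K_t$.

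The paper's proof does not supply the missing step. It uses $\int h_t\,d\mathcal{S}_{n-1-j}([h_t],\cdot)=n\mathcal{W}_j([h_t])$ and asserts $h_{[h_t]}=h_t$ outright, which is the same false identification. So the lemma as stated is correct only for $j=0$, where your argument (with (a) oriented correctly) is the standard proof of Aleksandrov's variational lemma.

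For $j\ge1$ two salvages are available:
\begin{itemize}
\item Your steps (a) and (c) do yield \eqref{e27} whenever every $h_t$ is itself a support function, since then (b) is vacuous.
\item Formula \eqref{e28} can be rescued for $K$ of class $C^2_+$ and arbitrary $\phi\in C(\mathbb S^{n-1})$. Pick $\psi\in C^2(\mathbb S^{n-1})$ with $\|\phi-\psi\|_\infty\le\varepsilon$, and note that $h_K+t\psi\pm|t|\varepsilon$ are support functions for small $|t|$. Sandwich $[h_K+t\phi]$ between the corresponding bodies via Lemma \ref{l22}(ii). Then the upper and lower one-sided derivatives lie within $O(\varepsilon)$ of $(1-\frac jn)\int\phi\,d\mathcal{S}_{n-1-j}(K,\cdot)$.
\end{itemize}
For general (e.g.\ polygonal) $K\in\mathscr{K}^n$ no such formula holds.
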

\begin{proof} Below is a two-fold argument for \eqref{e27}.
	\begin{itemize}
		\item On the one hand, the weak convergence of $$
		\text{$d\mathcal{S}_{n-1-j}([h_t],\xi)$\ {as}\ $t\to 0^+$\  (cf. Lemma \ref{l22}(vi))},
		$$
		Fatou's lemma and Lemma \ref{l22}'s \eqref{e26} are utilized to derive
		\begin{align*}
		v&\equiv \left(1-\frac{j}{n}\right)\int_{\mathbb S^{n-1}}\partial_t{h_t(\xi)}\Big|_{t=0}\,d\mathcal{S}_{n-1-j}\big([h_0],\xi\big)\\
		&\le\left(1-\frac{j}{n}\right)\liminf_{t\to 0^+}\int_{\mathbb S^{n-1}}t^{-1}\big(h_t(\xi)-h_0(\xi)\big)\,d\mathcal{S}_{n-1-j}\big([h_t],\xi\big)\\
		&=\left(1-\frac{j}{n}\right)\liminf_{t\to 0^+} t^{-1}\left(\int_{\mathbb S^{n-1}} \frac{d\mathcal{S}_{n-1-j}\big([h_t],\xi\big)}{\big(h_t(\xi)\big)^{-1}}-\int_{\mathbb S^{n-1}}\frac{d\mathcal{S}_{n-1-j}\big([h_t],\xi\big)}{\big(h_0(\xi)\big)^{-1}}\right)\\
		&=(n-j)\liminf_{t\to 0^+}t^{-1}\left(\mathcal{W}_j\big([h_t]\big)-\mathcal{W}_j\big([h_t],[h_0]\big)\right)\\
		&\le(n-j)\liminf_{t\to 0^+}t^{-1}\left(\mathcal{W}_j\big([h_t]\big)-\Big(\mathcal{W}_j\big([h_0]\big)\Big)^{\frac1{n-j}}\Big(\mathcal{W}_j\big([h_t]\big)\Big)^{\frac{n-1-j}{n-j}}\right)\\
		&=(n-j)\Big(\mathcal{W}_j\big([h_0]\big)\Big)^\frac{n-1-j}{n-j}\liminf_{t\to 0^+}t^{-1}\left(\Big(\mathcal{W}_j\big([h_t]\big)\Big)^\frac{1}{n-j}-\Big(\mathcal{W}_j\big([h_0]\big)\Big)^\frac{1}{n-j}\right).
		\end{align*}
		
		\item On the other hand, \eqref{e26}, Fatou's lemma and
		$
		{\it{h}}_{[h_t]}=h_t
		$
		are utilized to imply
		\begin{align*}
		&(n-j)\big(\mathcal{W}_j\big([h_0]\big)\big)^\frac{n-1-j}{n-j}\limsup_{t\to 0^+}t^{-1}\left(\Big(\mathcal{W}_j\big([h_t]\big)\Big)^\frac{1}{n-j}-\Big(\mathcal{W}_j\big([h_0]\big)\Big)^\frac{1}{n-j}\right)\\
		&\quad=(n-j)\limsup_{t\to 0^+}t^{-1}\left(\Big(\mathcal{W}_j\big([h_t]\big)\Big)^\frac{n-1-j}{n-j}\Big(\mathcal{W}_j\big([h_t]\big)\Big)^\frac{1}{n-j}-\mathcal{W}_j\big([h_0]\big)\right)\\
		&\quad\le(n-j)\limsup_{t\to 0^+}t^{-1}\left(\mathcal{W}_j\big([h_0],[h_t]\big)-\mathcal{W}_j\big([h_0]\big)\right)\\
		&\quad=\left(1-\frac{j}{n}\right)\limsup_{t\to 0^+}t^{-1}\left(\int_{\mathbb S^{n-1}}\frac{d\mathcal{S}_{n-1-j}([h_0],\xi)}{\big({\it{h}}_{[h_t]}(\xi)\big)^{-1}}-\int_{\mathbb S^{n-1}}\frac{d\mathcal{S}_{n-1-j}([h_0],\xi)}{{\it{h}}_{[h_0]}(\xi)}\right)\\
		&\quad\le\left(1-\frac{j}{n}\right)\int_{\mathbb S^{n-1}}\partial_t {\it{h}}_{[h_t]}(\xi)\Big|_{t=0}\,d\mathcal{S}_{n-1-j}([h_0],\xi)\\
		&\quad=\left(1-\frac{j}{n}\right)\int_{\mathbb S^{n-1}}\partial_t h_t(\xi)\Big|_{t=0}\,d\mathcal{S}_{n-1-j}([h_0],\xi)\\
		&\quad=v.
		\end{align*}
		The above estimates involving $\liminf$ and $\limsup$, along with the chain rule, give
		\begin{align*}
		v=(n-j)\mathcal{W}_j\big([h_0]\big)^\frac{n-1-j}{n-j}\partial_t\Big(\mathcal{W}_j\big([h_t]\big)\Big)^\frac1{n-j}\Big|_{t=0^+}=\partial_t\mathcal{W}_j\big([h_t]\big)\Big|_{t=0^+}.
		\end{align*}
		This last formual, together with the following formula
		$$
		\lim_{t\to 0^{-}}t^{-1}\big(h_t(\xi)-h_0(\xi)\big)=\lim_{t\to 0^{+}}(-t)^{-1}\big(h_{-t}(\xi)-h_0(\xi)\big),
		$$
		yields
		$
		v=\partial_t\mathcal{W}_j\big([h_t]\big)\Big|_{t=0^{-}},
		$
		whence reaching \eqref{e27}.
	\end{itemize}
	
	Especially, since
	$
	\partial_t\big({\it{h}}_K+t\phi\big)=\phi,
	$
	this, along with
	$$\eqref{e27}\ \ \&\ \
	\mathsf{g}_\ast\big(d{\mathsf{H^l}}_{n-1-j}(K,\cdot)\big)=d\mathcal{S}_{n-1-j}(K,\cdot),
	$$
	gives \eqref{e28} right away.
	
\end{proof}

Because
$$
\begin{cases}
\big({\sigma_{n-1}}\big)^{-1}\,d\mathcal{H}^{n-1}(\cdot)=\left({\int_{\mathbb S^{n-1}}d\mathcal{H}^{n-1}}\right)^{-1}d\mathcal{H}^{n-1}(\cdot) \ \mbox{on}\ \   \mathbb S^{n-1};\\
\Big(\|e^{-|\cdot|^2}\|_{L^1(\mathbb R^n)}\Big)^{-1}e^{-|\cdot|^2}\,d\mathcal{L}^n(\cdot)=\left(\int_{\mathbb R^n}e^{-|\cdot|^2}\,d\mathcal{L}^n(\cdot)\right)^{-1}e^{-|\cdot|^2}d\mathcal{L}^n(\cdot)\ \ \mbox{on}\ \ \mathbb R^n,
\end{cases}
$$
are two standard probability measures, it is natural to take into account an issue how to pinch the $\{0,1,...,n-1\}\ni j$-th quermassintegral radius of $K$
$$
\left(\frac{\mathcal{W}_j(K)}{\mathcal{W}_j(\overline{\mathbb B^n})}\right)^\frac1{n-j}\ \ \text{
	amongst}\ \
\mathscr{K}^n_{{w}_j}=\Big\{K\in \mathscr{K}^n:\ \mathcal{W}_j( K)\ge {\mathcal W}_j(\overline{\mathbb B^n})\Big\}.
$$
In other words, we wonder
$$
{\hbox{when\ is}\  \sup_{K\in\mathscr{K}_{w_j}^n}\Psi_{w_j,h}(K)=\sup_{K\in\mathscr{K}_{w_j}^n}{\left(\frac{\mathcal{W}_j(K)}{\mathcal{W}_j(\overline{\mathbb B^n})}\right)^\frac1{j-n}}{\left(\frac{\int_{K} h\,d\mathcal{L}^n}{\|h\|_{L^1(\mathbb R^n)}}\right)^\frac{1}{n}}
	\ \ \hbox{achievable}\ ?}
$$

In an attempt to settle the above problem (directly motivated by Yau's \cite[Problem 59]{Yau} handled within \cite[Theorem 1.1]{Xaig} (cf. \cite{XZ} for certain of the related information for ${{\mathsf{H^l}}}_p$, we recall the Gaussian map
$
\mathsf{g}: \partial K\to \mathbb S^{n-1},
$
for which $\mathsf{g}(x)$ comprises a unit normal vector for almost every $x\in\partial K$ and the induced pull-back measure of $f\,d\mathcal{H}^{n-1}$ on $\partial K$
$$
\mathsf{g}_\ast(f\,d\mathcal{H}^{n-1})(B)=\int_{\mathsf{g}^{-1}(B)}f\,d\mathcal{H}^{n-1}\quad\forall\quad\hbox{Borel\ set}\ \ B\subseteq\mathbb S^{n-1},
$$
thereby using Lemma \ref{l21}(ii) to gain the following result tied to Theorem \ref{t11}.

\begin{prop}\label{p24} Given $j\in\{0,...,n-1\}$. Let $h$ be nonnegative, continuous and Lebesgue-integrable on $\mathbb R^n$. Then the following three statements are equivalent:
	
	\begin{itemize}
		
		\item[(i)] There is $K_{\dagger}\in\mathscr{K}^n_{w_j}$ such that
		\begin{equation}
		\label{e29}
		\underset{K\in\mathscr{K}^n_{w_j}}{\sup}\Psi_{w_j,h}(K)=\Psi_{w_j,h}(K_{\dagger}).
		\end{equation}

		\item[(ii)] There is $K_{\dagger\dagger}\in\mathscr{K}^n_{w_j}$ such that
		\begin{equation}
		\label{e210}
		\mathsf{g}_\ast\left(\frac{d{\mathsf{H^l}}_{n-1-j}(K_{\dagger\dagger},\cdot)}{\int_{\mathbb S^{n-1}}{\it{h}}_{K_{\dagger\dagger}}(\cdot)\mathsf{g}_\ast\big(d{\mathsf{H^l}}_{n-1-j}(K_{\dagger\dagger},\cdot)\big)}\right)=
		\mathsf{g}_\ast\left(\frac{\big(\frac{h}{n}\big)d\mathcal{H}^{n-1}\big|_{\partial K_{\dagger\dagger}}}{\|h\|_{L^1(K_{\dagger\dagger})}}\right).
		\end{equation}
		
		\item[(iii)] There is $K_{\dagger\dagger\dagger}\in\mathscr{K}^n_{w_j}$ such that
		\begin{equation}
		\label{e211}
		0<\frac{\|h\|_{L^1(K_{\dagger\dagger\dagger})}}{\|h\|_{L^1(\mathbb R^n)}}=\frac{\int_{K_{\dagger\dagger\dagger}}h\,d\mathcal{L}^n}{\|h\|_{L^1(\mathbb R^n)}}\le 1.
		\end{equation}
		
	\end{itemize}
	And yet there is no general uniqueness up to homothety for $\big\{K_{\dagger},K_{\dagger\dagger}\big\}$ in $\big\{\eqref{e29},\eqref{e210}\big\}$.
\end{prop}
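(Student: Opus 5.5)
We will prove the cycle (i)$\Rightarrow$(ii)$\Rightarrow$(iii)$\Rightarrow$(i). Throughout, $\Psi_{w_j,h}$ is scale-sensitive in a controlled way: by Lemma \ref{l22}(i), $\mathcal{W}_j(rK)^{\frac1{j-n}}=r^{-1}\mathcal{W}_j(K)^{\frac1{j-n}}$, while $\int_{rK}h\,d\mathcal{L}^n$ is bounded by $\|h\|_{L^1(\mathbb R^n)}$. Hence $\Psi_{w_j,h}$ is bounded above on $\mathscr{K}^n_{w_j}$ by $1$, since there $\mathcal{W}_j(K)/\mathcal{W}_j(\overline{\mathbb B^n})\ge1$ and the $h$-fraction is at most $1$. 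This already makes (i) meaningful. Lemma \ref{l21}(ii) with $e^{-|x|^2}$ replaced by a general $h$ (the same comparison $\int_K h\le\|h\|_{L^\infty}\mathcal{L}^n(K)$ combined with Osserman and \eqref{e21}) is what we use to show that maximizing sequences cannot escape: if $\mathcal{W}_j(K_i)\to\infty$, then $\Psi_{w_j,h}(K_i)\to0$, while if $\mathrm{inr}(K_i)\to0$ with bounded $\mathcal{W}_j$, then $\int_{K_i}h\to0$.

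\textbf{(iii)$\Rightarrow$(i).} Assumption \eqref{e211} gives some $K$ with $\Psi_{w_j,h}(K)>0$, so $\sup\Psi_{w_j,h}>0$. Take a maximizing sequence $K_i$. By the two observations above, $\mathcal{W}_j(K_i)$ stays bounded, and so, by Lemma \ref{l21}(i) and \eqref{e21}, do the mean widths. The inradii also stay away from $0$. The sets $K_i$ must moreover meet a fixed large ball, since otherwise $\int_{K_i}h\to0$ by integrability of $h$. Blaschke selection then yields a Hausdorff limit $K_\dagger\in\mathscr{K}^n$ with nonempty interior. Continuity of $\mathcal{W}_j$ (Lemma \ref{l22}(vi)) and dominated convergence for $\int_{K_i}h$ show $K_\dagger\in\mathscr{K}^n_{w_j}$ and that it attains the supremum.

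\textbf{(i)$\Rightarrow$(ii).} Since $\Psi_{w_j,h}(K_\dagger)>0$, we perturb $K_\dagger$ through Alexandrov domains $[h_{K_\dagger}+t\phi]$ with $\phi\in C(\mathbb S^{n-1})$. If $\mathcal{W}_j(K_\dagger)>\mathcal{W}_j(\overline{\mathbb B^n})$, small perturbations in both directions remain admissible. In the boundary case we first note that the maximizer may be dilated by $r\ge1$ while staying admissible, and this only helps or is neutral to first order, so the Euler--Lagrange equation still holds for all $\phi$. Differentiating $\log\Psi_{w_j,h}$ at $t=0$, we use \eqref{e28} for the numerator term. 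For the $h$-term we use the standard formula $\partial_t\int_{[h_K+t\phi]}h\,d\mathcal{L}^n\big|_{t=0}=\int_{\partial K}\phi(\mathsf g(x))h(x)\,d\mathcal{H}^{n-1}(x)$, which is proved via the Aleksandrov lemma in the same Fatou style as Lemma \ref{l23}. Setting the derivative to zero for all $\phi$ gives equality of two measures on $\mathbb S^{n-1}$. Normalizing with Lemma \ref{l22}(v), where $\int h_K\,d\mathcal{S}_{n-1-j}=n\mathcal{W}_j$, the equation becomes exactly \eqref{e210} with $K_{\dagger\dagger}=K_\dagger$; the factors $1-\frac jn$ and $\frac1{n-j}$ cancel to leave $1/n$.

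\textbf{(ii)$\Rightarrow$(iii).} This direction is trivial. A $K_{\dagger\dagger}$ satisfying \eqref{e210} must have $\|h\|_{L^1(K_{\dagger\dagger})}>0$ for the right side to make sense, so \eqref{e211} holds with $K_{\dagger\dagger\dagger}=K_{\dagger\dagger}$.

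\textbf{Non-uniqueness.} We use $h$ constant on a large region, for example equal to $1$ on a ball $B_R$ and decaying outside. Then any $K$ inside $B_R$ with the relevant Minkowski-type equality is critical. More simply, every sufficiently large admissible translate of a maximizer inside the plateau remains a maximizer, and the translates fail to be homothetic as sets in position unless one restricts to the class up to homothety. For genuine non-homothetic examples, we take $j=n-1$ and $h=\mathbf 1$ near a region where several shapes give equal ratios.

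\textbf{Main obstacle.} The main obstacle is the compactness step in (iii)$\Rightarrow$(i), namely excluding degenerating or escaping maximizing sequences. A secondary difficulty is handling the constraint boundary $\mathcal{W}_j=\mathcal{W}_j(\overline{\mathbb B^n})$ in the Euler--Lagrange derivation, which we treat via dilation monotonicity.
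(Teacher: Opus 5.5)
Your cycle (iii)$\Rightarrow$(i)$\Rightarrow$(ii)$\Rightarrow$(iii) follows the paper's proof, including compactness through Osserman's inequality and the first variation via \eqref{e28}. However, the boundary case of (i)$\Rightarrow$(ii) is a genuine gap, and it cannot be closed. When $\mathcal{W}_j(K_\dagger)=\mathcal{W}_j(\overline{\mathbb B^n})$, only perturbations $[{\it{h}}_{K_\dagger}+t\phi]$ with $\partial_t\mathcal{W}_j\ge0$ stay in $\mathscr{K}^n_{w_j}$. Maximality therefore gives only a multiplier rule $d\log\Psi_{w_j,h}=-\mu\,d\log\mathcal{W}_j$ with $\mu\ge0$, not $d\log\Psi_{w_j,h}=0$. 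Your dilation remark points the wrong way. Since $rK_\dagger$, $r\ge1$, is admissible and $K_\dagger$ is a maximizer, dilation is to first order neutral or \emph{harmful}, never helpful, and nothing forces it to be neutral. Take $h(x)=e^{-|x|^2}$, the paper's own example. For every $K\in\mathscr{K}^n$ and $s>0$,
\[
\Psi_{w_j,h}(sK)=\left(\frac{\mathcal{W}_j(K)}{\mathcal{W}_j(\overline{\mathbb B^n})}\right)^{\frac{1}{j-n}}\left(\frac{\int_K e^{-s^2|x|^2}\,d\mathcal{L}^n(x)}{\|h\|_{L^1(\mathbb R^n)}}\right)^{\frac1n},
\]
which is strictly decreasing in $s$. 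Hence every maximizer lies on the constraint boundary, and there the derivative in the direction $\phi={\it{h}}_{K_\dagger}$ is strictly negative, i.e.\ $\mu>0$.

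In fact \eqref{e210} has no solution at all for this $h$. Integrate both sides of \eqref{e210} against ${\it{h}}_K$. The left side gives $1$ by its normalization. On the right, use ${\it{h}}_K(\mathsf{g}(x))=x\cdot\mathsf{g}(x)$ on $\partial K$ and the divergence theorem for the field $x\,h(x)$ to get
\[
\frac{\int_K\big(nh+x\cdot\nabla h\big)\,d\mathcal{L}^n}{n\int_K h\,d\mathcal{L}^n}=1-\frac{2\int_K|x|^2e^{-|x|^2}\,d\mathcal{L}^n(x)}{n\int_K e^{-|x|^2}\,d\mathcal{L}^n(x)}<1 .
\]
Since $K=\overline{\mathbb B^n}$ satisfies \eqref{e211}, (iii) holds while (ii) fails, so the equivalence is false as stated. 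The paper's Step 2 has the same defect: it treats ${\it{h}}_{K_\dagger}$ as an unconstrained critical point of $\tilde\Psi_{w_j,h}$. So the paper contains no idea that would rescue your step. What your computation does establish is \eqref{e210} when $\mathcal{W}_j(K_\dagger)>\mathcal{W}_j(\overline{\mathbb B^n})$. In general it gives only
\[
\frac{\mathsf{g}_\ast\big(h\,d\mathcal{H}^{n-1}|_{\partial K_\dagger}\big)}{\|h\|_{L^1(K_\dagger)}}=\big(1-(n-j)\mu\big)\frac{d\mathcal{S}_{n-1-j}(K_\dagger,\cdot)}{\mathcal{W}_j(K_\dagger)}\quad\text{for some }\mu\ge0,
\]
so (ii) has to be weakened to such a proportionality for the cycle to close.

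There are also smaller points in (iii)$\Rightarrow$(i).
\begin{itemize}
\item Lemma \ref{l21}(i) and \eqref{e21} bound the mean width from \emph{below} by $\mathcal{W}_j$, so they cannot give bounded mean widths. Bound the volumes instead, by \eqref{e21} with $i=0$ or as the paper does via $\Psi_{w_j,h}(K)\le(\mathcal{L}^n(K)/\upsilon_n)^{-1/n}$. Then combine this with the inradius lower bound to bound the diameters.
\item Since $h$ need not be bounded, use absolute continuity of the integral instead of $\|h\|_{L^\infty}$.
\item Your observation that the $K_i$ must stay in a fixed ball is a real improvement. The paper applies Blaschke selection without controlling translations, although $\Psi_{w_j,h}$ is not translation invariant.
\end{itemize}
Finally, your non-uniqueness paragraph is not an argument. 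Translates are homothetic, so they give no counterexample to uniqueness up to homothety. For $j\ge1$, a non-ball lying in a plateau $\{h=1\}$ that contains a unit ball is strictly beaten by that unit ball, by the equality case of \eqref{e21}. The plateau idea does work for $j=0$. If $0\le h\le1$ and $h=1$ on $\overline{\mathbb B^n_R}$ with $R>1$, then every convex body in $\overline{\mathbb B^n_R}$ of volume at least $\upsilon_n$ is a maximizer.
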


\begin{proof} The argument proceeds by the following steps.
	
	\subsubsection*{Step 1: (iii)$\Longrightarrow$(i)} Suppose that (iii) holds with \eqref{e211}. Then
	$$
	0<\Psi_{w_j,h}(K_{\dagger\dagger\dagger})=\left(\frac{\mathcal{W}_j(K_{\dagger\dagger\dagger})}{\mathcal{W}_j(\overline{\mathbb B^n})}\right)^\frac1{j-n}\left(\frac{\|h\|_{L^1(K_{\dagger\dagger\dagger})}}{\|h\|_{L^1(\mathbb R^n)}}\right)^\frac1n\le 1.
	$$
	This implies
	$$
	0<\Psi_{w_j,h}(K_{\dagger\dagger\dagger})\le\sup_{K\in\mathscr{K}^n_{w_j}}\Psi_{w_j,h}(K)\le 1.
	$$
	So, there is a sequence $\{K_i\}\subset\mathscr{K}^n_{w_j}$ such that
	$$
	{\Psi}_{w_j}(K_i)\to \underset{K\in\mathscr{K}^n_{w_j}}{\sup} {\Psi}_{w_j}(K).
	$$
	Of course, $\{K_i\}$ cannot reduce to a singleton otherwise $\mathcal{W}_j(K_i)$ reduces to zero when $i\to\infty$. Therefore, we are about to show that $\hbox{inr}(K_i)$ has a uniform positive lower bound. To do so, assume that the sequence $\big\{\hbox{inr}(K_i)\big\}$ has no a positive lower bound. Then there is a subsequence $\big\{\hbox{inr}(K_{i_k})\big\}$ which tends to zero when $k\to\infty$. If $\big\{\mathcal{W}_j(K_{i_k})\big\}$ is unbounded, then
	$$
	0<\Psi_{w_j,h}(K_{\dagger\dagger\dagger})\le\limsup_{k\to\infty}\Psi_{w_j,h}(K_{i_k})\le \limsup_{k\to\infty}\left(\frac{\mathcal{W}_j((K_{i_k})}{\mathcal{W}_j(\overline{\mathbb B^n})}\right)^\frac1{j-n}=0,
	$$
	which is a contradiction. This illustrates that $\big\{\mathcal{W}_j(K_{i_k})\big\}$ is bounded. However, the second inequality in \eqref{e24} of Lemma \ref{l21}(ii) gives
	$$
	\begin{cases}
	\frac{\mathcal{L}^n(K_{i_k})}{\upsilon_n}\le \left(\frac{\hbox{inr}(K_{i_k})}{n^{-1}}\right)\left(\frac{\mathcal{W}_j(K_{i_k})}{\mathcal{W}_j(\overline{\mathbb B^n})}\right)^\frac{n-1}{n-j}\le\left(\frac{\hbox{inr}(K_{i_k})}{n^{-1}}\right)\underset{k\in\mathbb N}{\sup} \left(\frac{\mathcal{W}_j(K_{i_k})}{\mathcal{W}_j(\overline{\mathbb B^n})}\right)^\frac{n-1}{n-j};\\
	\underset{k\to\infty}{\lim}\mathcal{L}^n(K_{i_k})=0.
	\end{cases}
	$$
	This in turn implies the following contradiction:
	$$
	0<\Psi_{w_j,h}(K_{\dagger\dagger\dagger})\le\limsup_{k\to\infty}\Psi_{w_j,h}(K_{i_k})\le\limsup_{k\to\infty}\left(\frac{\|h\|_{L^1(K_{i_k})}}{\|h\|_{L^1(\mathbb R^n)}}\right)^\frac1n=0.
	$$
	Accordingly, $\hbox{inr}(K_i)$ has a uniform positive lower bound $r_0$. Under this, an application of \eqref{e23} in Lemma \ref{l21}(i) \& \eqref{e21} gives
	$$
	\begin{cases}
	2^{-1}{\hbox{dia}(K_{i})}\ge\left(\frac{\mathcal{W}_j( K_i)}{\mathcal{W}_j(\overline{\mathbb B^n})}\right)^\frac{1}{n-j}\ge r_0;\\
	0<\Psi_{w_j,h}(K_{\dagger\dagger\dagger})\le\limsup_{i\to\infty}\Psi_{w_j,h}(K_{i})\le\limsup_{i\to\infty}\left(\frac{\mathcal{L}^n(K_i)}{\upsilon_n}\right)^{-\frac1n},
	\end{cases}
	$$
	and so that the sequence $\big\{\mathcal{L}^n(K_i)\big\}$ is bounded from above. Since $r_0$ is a positive lower bound of the sequence $\big\{\hbox{inr}(K_i)\big\}$, the sequence $\big\{\hbox{dia}(K_i)\big\}$
	is bounded from above. Utilizing the classical Blaschke selection principle we can choose such a $\{K_i\}$'s subsequence that approaches an element $K_{\dagger}\in\mathscr{K}^n$. Since not only $\mathcal{W}_j(\cdot)$ is continuous but also $h$ is continuous on $\mathbb R^n$, one has that not only ${\Psi}_{w_j}(\cdot)$ is continuous but also
	$$
	K_{\dagger}\in\mathscr{K}^n_{w_j}\ \ \text{obeys}\ \
	{\Psi}_{w_j,h}(K_{\dagger})=\underset{K\in\mathscr{K}^n_{w_j}}{\sup}{\Psi}_{w_j,h}(K).
	$$
	Thus, (i) holds with \eqref{e29}.
	
	\subsubsection*{Step 2: (i)$\Longrightarrow$(ii)} Suppose that (i) is valid with \eqref{e29}. For $f\in C^+(\mathbb S^{n-1})$ we define
	$$
	\tilde{\Psi}_{w_j,h}(f)=\Psi_{w_j,h}\big([f]\big).
	$$
	Since $K_{\dagger}\in\mathscr{K}^n_{w_j}$ is a maximizer for $\Psi_{w_j,h}$, its support function  ${\it{h}}_{K_{\dagger}}$ is a local extreme function of the functional
	$$
	C^+(\mathbb S^{n-1})\ni\phi\mapsto\tilde{\Psi}_{w_j,h}({\it{h}}_{K_{\dagger}}+t\phi)
	\ \ \text{at}\ \ t=0.
	$$
	This implies
	$$
	\partial_t\tilde{\Psi}_{w_j,h}\big({\it{h}}_{K_{\dagger}}+t\phi)\big)\Big|_{t=0}=0.
	$$
	Now, upon utilizing \eqref{e28} in Lemma \ref{l23} and the variational formula \cite[(4)]{Tso2} for $\int_K h\,d\mathcal{L}^n$ we compute
	\begin{align*}
	0&=\partial_t\tilde{\Psi}_{w_j,h}\big({\it{h}}_{K_{\dagger}}+t\phi)\big)\Big|_{t=0}\\
	&=(j-n)^{-1}\left(\frac{\mathcal{W}_j(K_{\dagger})}{\mathcal{W}_j(\overline{\mathbb B^n})}\right)^{-\frac{1+n-j}{n-j}}\partial_t\left(\frac{\mathcal{W}_j([{\it{h}}_{K_{\dagger}}+t\phi])}{\mathcal{W}_j(\overline{\mathbb B^n})}\right)\left(\frac{\|h\|_{L^1(K_{\dagger})}}{\|h\|_{L^1(\mathbb R^n)}}\right)^\frac1n\\
	&\ \ +\ n^{-1}\left(\frac{\mathcal{W}_j(K_{\dagger})}{\mathcal{W}_j(\overline{\mathbb B^n})}\right)^{\frac{1}{j-n}}\left(\frac{\|h\|_{L^1(K_{\dagger})}}{\|h\|_{L^1(\mathbb R^n)}}\right)^\frac{1-n}n\int_{\partial K_{\dagger}}\phi\big(\mathsf{g}(x)\big)\left(\frac{h\,d\mathcal{H}^{n-1}}{\|h\|_{L^1(\mathbb R^n)}}\right)\\
	&= -n^{-1}\left(\frac{\mathcal{W}_j(K_{\dagger})}{\mathcal{W}_j(\overline{\mathbb B^n})}\right)^{-\frac{1+n-j}{n-j}}\left(\frac{\int_{\partial K_{\dagger}}\phi\big(\mathsf{g}(x)\big)\,\mathsf{g}_\ast\big(d{\mathsf{H^l}}_{n-1-j}(K_{\dagger},x)\big)}{\mathcal{W}_j(\overline{\mathbb B^n})}\right)\left(\frac{\|h\|_{L^1(K_{\dagger})}}{\|h\|_{L^1(\mathbb R^n)}}\right)^\frac1n\\
	&\ \ +\ n^{-1}\left(\frac{\mathcal{W}_j(K_{\dagger})}{\mathcal{W}_j(\overline{\mathbb B^n})}\right)^{\frac{1}{j-n}}\left(\frac{\|h\|_{L^1(K_{\dagger})}}{\|h\|_{L^1(\mathbb R^n)}}\right)^\frac{1-n}n\int_{\partial K_{\dagger}}\phi\big(\mathsf{g}(x)\big)\left(\frac{h\,d\mathcal{H}^{n-1}}{\|h\|_{L^1(\mathbb R^n)}}\right)\\
	&= -n^{-1}\left(\frac{\mathcal{W}_j(K_{\dagger})}{\mathcal{W}_j(\overline{\mathbb B^n})}\right)^{-\frac{1+n-j}{n-j}}\left(\frac{\int_{\mathbb S^{n-1}}\phi\,d\mathcal{S}_{n-1-j}(K_{\dagger},\cdot)}{\mathcal{W}_j(\overline{\mathbb B^n})}\right)\left(\frac{\|h\|_{L^1(K_{\dagger})}}{\|h\|_{L^1(\mathbb R^n)}}\right)^\frac1n\\
	&\ \ +\ n^{-1}\left(\frac{\mathcal{W}_j(K_{\dagger})}{\mathcal{W}_j(\overline{\mathbb B^n})}\right)^{\frac{1}{j-n}}\left(\frac{\|h\|_{L^1(K_{\dagger})}}{\|h\|_{L^1(\mathbb R^n)}}\right)^\frac{1-n}n\int_{\mathbb S^{n-1}}\phi\,\mathsf{g}_\ast\left(\frac{h\,d\mathcal{H}^{n-1}\big|_{\partial K_{\dagger}}}{\|h\|_{L^1(\mathbb R^n)}}\right).
	\end{align*}
	This in turn yields that
	$$
	\frac{\int_{\mathbb S^{n-1}}\phi\,\mathsf{g}_\ast\big(d{\mathsf{H^l}}_{n-1-j}(K_{\dagger},\cdot)\big)}{\mathcal{W}_j(K_{\dagger})}=\frac{\int_{\mathbb S^{n-1}}\phi\,d\mathcal{S}_{n-1-j}(K_{\dagger},\cdot)}{\mathcal{W}_j(K_{\dagger})}=
	\int_{\mathbb S^{n-1}}\phi\,\mathsf{g}_\ast\left(\frac{h\,d\mathcal{H}^{n-1}\big|_{\partial K_{\dagger}}}{\|h\|_{L^1(K_{\dagger})}}\right)
	$$
	holds for not only $\phi\in C^+(\mathbb S^{n-1})$ but also $\phi\in C(\mathbb S^{n-1})$. Accordingly, the desired equation \eqref{e210} follows from choosing $K_{\dagger\dagger}=K_{\dagger}$.
	
	\subsubsection*{\it Step 3: (ii)$\Longrightarrow$(iii)} This is evident since (ii) derives
	\begin{equation*}
	\text{
		$\|h\|_{L^1(K_{\dagger\dagger\dagger})}>0$ for some $K_{\dagger\dagger\dagger}=K_{\dagger\dagger}\in\mathscr{K}^n_{w_j}$.}
	\end{equation*}

	\subsubsection*{Step 4: Non-uniqueness} Regarding the non-uniqueness up to homothety, we just consider two functions satisfying Proposition \ref{p24}(iii): $h(\cdot)=e^{-|\cdot|^2}$ in Lemma \ref{l21}(ii) or
	$$
	h(x)=\begin{cases}
	1\quad\hbox{for}\quad x\in\mathbb B^n;\\
	e^{1-|x|}\quad\hbox{for}\quad x\in\mathbb R^n\setminus\mathbb B^n;
	\end{cases}
	$$
	and see that if $K_{\blacktriangledown}=rK_{\dagger\dagger}+\{x_0\}$ then (via Lemma \ref{l22}(i))
	$$
	\begin{cases}
	\mathcal{W}_j(K_{\blacktriangledown})=r^{n-j}\mathcal{W}_j(K_{\dagger\dagger});\\
	\big(\mathcal{W}_j(K_{\blacktriangledown})\big)^{-1}d\mathcal{S}_{n-1-j}(K_{\blacktriangledown},\cdot)=r^{-1}\big(\mathcal{W}_j(K_{\dagger\dagger})\big)^{-1}d\mathcal{S}_{n-1-j}(K_{\dagger\dagger},\cdot),
	\end{cases}
	$$
	and yet, the following equation
	$$
	r^{-1}\,\mathsf{g}_\ast\left(\frac{h\,d\mathcal{H}^{n-1}\big|_{\partial K_{\dagger\dagger}}}{\|h\|_{L^1(K_{\dagger\dagger})}}\right)=\mathsf{g}_\ast\left(\frac{h\,d\mathcal{H}^{n-1}\big|_{\partial K_{\blacktriangledown}}}{\|h\|_{L^1(K_{\blacktriangledown})}}\right)
	$$
	is not always valid.
\end{proof}


\begin{thebibliography}{99}

\bibitem{AC} P. Acampora and E. Cristoeoroni, An isoperimetric result for an energy related to the $p$-capacity. Rend. Lincei Mat. Appl. 34(2023)831-844.

\bibitem{Alex1} A.D. Alexandrov, Zur Theorie der gemischten Volumina von konvexen K\"orpern, II. Neue Ungleichungen zwischen den gemischten Volumina und ihre Anwendungen, Mat. Sb. (N.S.) 2 (1937) 1205-1238 (in Russian).

\bibitem{Alex2} A.D. Alexandrov, Zur Theorie der gemischten Volumina von konvexen K\"orpern, III. Die Erweiterung zweeier Lehrsatze Minkowskis \"uber die konvexen Polyeder auf beliebige konvexe Flachen, Mat. Sb. (N.S.) 3 (1938) 27-46 (in Russian).

\bibitem{Ba} R. Barbato, Shape optimization for a nonlinear elliptic problem related
to thermal insulation, {Rend. Lincei Mat. Appl.} 35(2024)105-119.

\bibitem{Ber} M. van den Berg, On some isoperimetric inequalities for the Newtonian capacity. Commun. Contemp. Math. 27(2025)Paper No. 2450027, 20 pp.

\bibitem{BN} M. van den Berg and N. Gavitone, On functionals involving the $p$-capacity and the $q$-torsional rigidity. Calc. Var. (2025) 64:245.

\bibitem{Bor} C. Borell, {Hitting probability of killed Brownian motion: A study on geometric regularity}, {Ann. Sci. Ecole Norm. Sup\'er. Paris} 17(1984)451-467.

\bibitem{BCF} S.G. Bobkov, A. Colesanti and I. Fragal\`a, Quermassintegrals of quasi-concave functions and generalized Pr\'ekopa-Leindler inequalities. {Manuscripta Math.} 143(2014)131-169.

 \bibitem{BMP} H. Brezis, M. Marcus and A. Ponce, Nonlinear elliptic equations with measures revisited. In Mathematical Aspects of Nonlinear Dispersive Equations (J. Bour-
 gain, C. Kenig and S. Klainerman, eds.) { Annals of Mathematics Studies} 163, Princeton Univ. Press, Princeton, NJ 2007, pp. 55-110.

 \bibitem{BP} H. Brezis and A. Ponce, Reduced measures on the boundary. {J. Funct. Anal.} {229}(2005)95-120.

\bibitem{BFNT} D. Bucur, V. Ferone, C. Nitsch and C. Trombetti, A sharp estimate for the first Robin-Laplacian eigenvalue with negative boundary parameter. Att. Accad. Naz. Lincci Rend. Lincei Mat. Appl. 30(4)(2019)665-676.

\bibitem{Cha} G. D. Chakerian, {Isoperimetric inequalities for the mean width of a convex body}, {Geometriae Ded.} 1(1973)356-362.

 \bibitem{ChW} D. Chen and Y. Wei, Comparison results for relative Robin $p$-capacity on complete Riemannian manifolds. {J. Differential Equations} 423(2025)765-796.

\bibitem{DHMT} J. Dalphin, A. Henrot, S. Masnou and T. Takahashi, On the Minimization of total mean curvature. J. Geom. Anal. doi 10.1007/s12220-015-9646-y.

\bibitem{CG} A. Colesanti and E.S. G\'omez, Functional inequalities derived from the Brunn-Minkowski inequalities for quermassintegrals.   {J. Convex Anal.} 17(2010)35-49.

\bibitem{CH} A. Colesanti and D. Hug, Hessian measures of semi-convex functions and applications to support measures of convex conductors. {Manuscripta Math.} 101(2000)221-235.


 \bibitem{DNT} F. Della Pietra, C. Nitsch and C. Trombetti, An optimal insulation problem. {Math. Ann.} 382(2022)745-759.


\bibitem{Gar} R.J. Gardner, {The Brunn-Minkowski inequality}. {Bull. Amer. Math. Soc.} 39(2002)355-405.




\bibitem{HPR} A. Hurtado, V. Palmer and M. Ritor\'e, Comparison results for capacity. Indiana Univ. Math. J. 61(2012)539-555.

\bibitem{JXaim} X. Jin and J. Xiao, Essential $p$-capacity-volume estimates for rotationally symmetric manifolds. Adv. Math. 482(2025)110603.

\bibitem{JX1} X. Jin and J. Xiao, Heat dispersion laws in smooth compact manifolds. Bull. London Math. Soc. (in press)(2026)15pages.

 \bibitem{LiHo} X. Liu and T. Horiuchi, The equivalence among $p$-capacity, $p$-Laplace-capacities and Hausdorff measure. { Math. J. Ibaraki Univ.} {50}(2018)5-13.

\bibitem{LXZ} M. Ludwig, J. Xiao and G. Zhang, Sharp convex Lorentz-Sobolev inequalities. {Math. Ann.} {350}(2011)169-197.

\bibitem{Lut} E. Lutwak, A general Bieberbach inequality. {Math. Proc. Camb. Phil. Soc.} 78(1975)493-495.


\bibitem{LutGD} E. Lutwak, Inequalities for mean circumscribing simplices. {Geometriae Ded.} 66(1997)119-134.


\bibitem{MazBook} V. Maz'ya, {{Sobolev Spaces with Applications to Elliptic Partial Differential Equations}}. 2nd, revised and augmented edition, Springer, 2011.

\bibitem{Os} R. Osserman, {Bonnesen-stype isoperimetric inequalities}. {Amer. Math. Monthly} 86(1979)1-29.

\bibitem{Po} G. P\'olya, Estimating electrostatic capacity. {Amer. Math. Monthly} 54(1947)201-206.

\bibitem{Sa} J.R. Sangwine-Yager, {A Nonnesen-style inradius inequality in 3-space}. {Pacific J. Math.} 134(1988)173-178.


\bibitem{Sch} R. Schneider, {Convex Bodies: The Brunn-Minkowski Theory}. 2nd Expanded Ed. Cambridge Univ. Press, Cambridge, 2014.





\bibitem{Tso2} K.S. Tso, {A direct method approach for the existence of convex hypersurfaces with prescribed Gauss-Kronecker curvature}, { Math. Z.} 209(1992)339-334.


\bibitem{Xaig} J. Xiao,  A maximum problem of S.T. Yau for variational $p$-capacity. {Adv. Geom.} 17(2017)483-496.

\bibitem{Xaim} J. Xiao, $P$-capacity vs surface-area. Adv. Math. 308(2017)1318-1336.


\bibitem{XZ} J. Xiao and N. Zhang, Flux \& radii within the subconformal capacity.  {Calc. Var. Partial Differential Equations} 60, Article number: 120 (2021).

\bibitem{Yau} S.-T. Yau, {Problem section}. In: S.-T. Yau (ed.) Seminar on Differential Geometry, 669-706, {Ann. Math. Stud.} 102, Princeton University Press, Princeton, N.J., 1982.


\end{thebibliography}
\end{document}